\begin{document}
\newcommand{\mb}{\mathbb}
\newcommand{\mf}{\mathfrak}
\newcommand{\mc}{\mathcal}
\newcommand{\mbf}{\mathbf}
\newcommand{\im}{{\rm im}}
\newcommand{\del}{\partial}
\newcommand{\coker}{{\rm coker}}
\newcommand{\Tr}{\operatorname{Tr}}
\newcommand{\vol}{\operatorname{vol}}
\newcommand{\area}{\operatorname{area}}
\newcommand{\diag}{\text{diag}}

\newcommand{\ZZ}{\mathbb{Z}}
\newcommand{\QQ}{\mathbb{Q}}
\newcommand{\RR}{\mathbb{R}}
\newcommand{\CC}{\mathbb{C}}
\newcommand{\EE}{\mathbb{E}}
\newcommand{\PP}{\mathbb{P}}

\theoremstyle{plain}
  \newtheorem{theorem}{Theorem}[section]
  \newtheorem{proposition}[theorem]{Proposition}
  \newtheorem*{propositionnonumber}{Proposition}
  \newtheorem{lemma}[theorem]{Lemma}
  \newtheorem{corollary}[theorem]{Corollary}
  \newtheorem{conjecture}[theorem]{Conjecture}
\theoremstyle{definition}
  \newtheorem{definition}[theorem]{Definition}
  \newtheorem{example}[theorem]{Example}
  \newtheorem{examples}[theorem]{Examples}
  \newtheorem{question}[theorem]{Question}
  \newtheorem{problem}[theorem]{Problem}
  \newtheorem{assumption}[theorem]{Assumption}
  \newtheorem{remark}[theorem]{Remark}
  
  \title{Mesoscopic Perturbations of Large Random Matrices}
  \author{Jiaoyang Huang}
 \address{Department of  Mathematics,
Harvard University}
\email{jiaoyang@math.harvard.edu}
 \maketitle
\begin{abstract}
We consider the eigenvalues and eigenvectors of small rank perturbations of random  $N\times N$ matrices. We allow the rank of perturbation $M$ to increase with $N$, and the only assumption is $M=o(N)$. The spiked population model, proposed by Johnstone \cite{Jo}, is of this kind, in which all the population eigenvalues are 1's except for a few fixed eigenvalues. Our model is more general since we allow the number of non-unit population eigenvalues to grow with the population size. In both additive and multiplicative perturbation models, we study the nonasymptotic relation between the extreme eigenvalues of the perturbed random matrix and those of the perturbation. As $N$ goes to infinity, we derive the empirical distribution of the extreme eigenvalues of the perturbed random matrix. We also compute the appropriate projection of eigenvectors corresponding to the extreme eigenvalues of the perturbed random matrix. We prove that they are approximate eigenvectors of the perturbation. Our results can be regarded as an extension of the finite rank perturbation case to the full generality up to $M=o(N)$.
\end{abstract}

 \section{Introduction}
The study of sample covariance matrices is fundamental in multivariate statistics. When the size of population is fixed, and the sample size goes to  infinity, the sample covariance matrix provides a good approximation of the population covariance matrix. However, this is not the case when the size of the population is comparable to the sample size. For example, in the null case, the spectrum of the sample covariance matrix is governed by the Marchenko-Pastur law, which is obviously different from the spectrum of the identity matrix. In fact when the population size is large and comparable to the sample size, a basic phenomenon is that the sample eigenvalues are more spread out than the population eigenvalues. However, even in this case, the sample eigenvalues still contain a lot of information about the population eigenvalues, especially the information about the extreme eigenvalues of the population covariance matrix.

A basic idea is that if there are some eigenvalues of the sample covariance matrix well separated from the rest of the sample eigenvalues, one can infer that the population covariance matrix is also ``spiked" with a few significant eigenvalues. Based on this idea, Johnstone proposed the spiked population model \cite{Jo}, in which all the population eigenvalues are one except for a few fixed eigenvalues. Here a key practical question emerges: how many extreme eigenvalues should be retained as being ``significant"? We do not know the number of the extreme population eigenvalues in advance. Also it is possible that the number of the extreme eigenvalues of the population covariance matrix grows with the size of the population. If this is the case, what is the relation between the extreme eigenvalues of the sample covariance matrix and those of the population matrix?

To address these questions, we study the mesoscopic perturbations of large random matrices in the following two models: The additive perturbation model (if $W_N$ is the Wigner matrix, this model is also called the deformed Wigner model), 
\begin{align*}
\tilde{W}_N=W_N+P_N,
\end{align*}
and the multiplicative perturbation model (if $W_N$ is the Wishart matrix, this model is also called the spiked population model),
\begin{align*}
\tilde{W}_N=(I+P_N)^{\frac{1}{2}}W_N(I_N+P_N)^{\frac{1}{2}},
\end{align*}
where $W_N$ is the original random matrix, and $P_N$ is the perturbation. 

Enormous progress has been accomplished in the recent works on studying the influence of these two perturbation models. Most of the settings for the perturbations fall into two categories, either the perturbation is of full rank, or the perturbation is of finite rank.

In the full rank perturbation case, people mainly study the bulk of the spectrum, which refers to the full set of the eigenvalues. When $N$ becomes large, free probability provides us a good understanding of the global behavior of the asymptotic spectrum of the perturbed random matrices. More precisely, under some mild conditions, the empirical eigenvalue distribution of the perturbed random matrices converges to the free convolution of the limit empirical eigenvalue distributions of $W_N$ and  $P_N$ both in expectation and almost surely. We refer to \cite{Vo} and \cite{HP} for pioneering works and \cite{AGZ} for an introduction to free probability. In a series of papers \cite{ABKb,BKa,   BKc, KYb, LS, LSSY, OV}, universality of local eigenvalue statistics was proved for a large class of perturbations in both additive and multiplicative cases.

In the second case when the perturbation is of finite rank, the spectrum is not much altered due to Weyl's interlacement property of eigenvalues. However, the extreme eigenvalues of the perturbed matrix differ from that of the non-perturbed matrix if and only if the eigenvalues of the perturbation are above certain critical threshold. This phenomenon was made precise in \cite{BBP}, where the sharp transition (called the BBP phase transition) was first exhibited for the finite rank multiplicative perturbations of complex Gaussian Wishart matrices. In this case, it was shown that if the eigenvalues of the perturbation are above the threshold, the largest eigenvalue of the perturbed matrix deviates away from the bulk and has Gaussian fluctuation, otherwise it sticks to the bulk and fluctuates according to the Tracy-Widom law. Similar results were proved for the finite rank additive perturbations of complex Gaussian Wigner matrices in \cite{Peb, Pea}. Then in a series of papers \cite{BYa, BYb, BS,BBCF, CDFb, CDFa, CDFF, KYa, PRS, RS}, these results were generalized to Wishart and Wigner random ensembles with non-Gaussian entries. More generally, the results of BBP phase transition were extended to the case when the original matrix $W_N$ and the perturbation $P_N$ are orthogonally (or unitarily) independent in \cite{ BGMa, BGMb,BN}.

In this paper, we study the case when the rank $M$ of the perturbation $P_N$ may increase with $N$, but still in the regime $M=o(N)$. To our knowledge, this kind of perturbations was only studied by P{\'e}ch{\'e} in \cite{Peb, Pea} for deformed complex Gaussian Wigner matrices. P{\'e}ch{\'e} considered the perturbation $P_N = \text{diag} (\pi_1,...,\pi_1,\pi_2,...,\pi_{r_N+1},0,...,0)$ with $\pi_1$ multiplicity $k_N$, such that $k_N=o(N)$ and $r_N=o(N)$ . In that paper, P{\'e}ch{\'e} proved that the largest eigenvalue of the deformed Wigner matrix satisfies Tracy-Widom law after normalization. However, her method heavily depends on the explicit form of the correlation function of the deformed complex Gaussian Wigner matrices, and is hard to extend to non-Gaussian cases. In our paper, we detect the location of the extreme eigenvalues of the perturbed matrix with exponentially high probability in more general settings. Based on it we derive the empirical distribution of the extreme eigenvalues of the perturbed random matrix. Following the paper \cite{BN}, we also investigate the eigenvectors corresponding to the extreme eigenvalues of the perturbed random matrices. 

Our proofs rely on the derivation of the master equation representations, like (\ref{mastereq}), of the eigenvalues and eigenvectors of the perturbed matrix, which is a standard way to study the extreme eigenvalues in the case of finite rank perturbation (such as in \cite{BYa, BYb, BGMa, BN, Tao}). In this case, the master equation reduces the problem of understanding the extreme eigenvalues to the study of a finite rank matrix (the same rank as the perturbation), which is much easier to analyze. One can derive the location and the fluctuation of extreme eigenvalues by passing to the limit. In our setting, we get an $M\times M$ matrix $D(z)$, whose size increases with $N$. Instead of passing to the limit, we directly analyze such a matrix. We find that when $z$ is outside the spectrum of the original matrix, $D(z)$ is monotonic. Then we write $D(z)$ as a sum of two matrices, one is deterministic, and the other is close to a scalar matrix with high probability. The extreme eigenvalues of the perturbed matrix are those $z$ such that the determinant of $D(z)$ vanishes. By comparing $D(z)$ with a certain deterministic matrix, and taking advantage of its monotonicity, we are able to detect the location of the extreme eigenvalues with exponentially high probability.  
  
\section{Additive Perturbation Case}
\subsection{Definition and Notation}{\label{sumdef}}
In this section we study the eigenvalues and eigenvectors of an additively perturbed real symmetric (or Hermitian) random matrix $W_N$ by a deterministic diagonal matrix $P_N$ with small rank, 
\begin{align*}
\tilde{W}_N=W_N+P_N. 
\end{align*}
For the random matrix $W_N$, we consider two cases:
\begin{enumerate}
\item $W_N$ is orthogonally (or unitarily) independent with $P_N$. More precisely, $W_N=U^*H_NU$, where $U$ follows the Haar measure on $N\times N$ orthogonal (or unitary) group, and $H_N$ is an $N\times N$ symmetric (or Hermitian) matrix. We denote the ordered eigenvalues of $H_N$ by $\lambda_{1}(H_N)\geq \lambda_{2}(H_N)\geq \cdots\geq \lambda_{N}(H_N)$. Let $\mu_{N}$ be the empirical eigenvalue distribution of $H_N$, i.e.
$\mu_{N}=\frac{1}{N}\sum_{i=1}^{N}\delta_{\lambda_{i}(H_N)}$. Let $m_N(z)$ be the Stieltjes transform of the empirical eigenvalue distribution of $H_N$,
\begin{align*}
m_{N}(z)=\int_{\RR}\frac{d\mu_N(x)}{z-x}=\frac{1}{N}\sum_{i=1}^{N}\frac{1}{z-\lambda_i(H_N)},
\end{align*}
for $z\in (-\infty, \lambda_N(H_N))\cup (\lambda_1(H_N),\infty)$. Since $m_N$ is monotonically decreasing on $(-\infty, \lambda_N(H_N))$ and $(\lambda_1(H_N),\infty)$ respectively, we can define its inverse map $m_N^{-1}$ on $\RR-\{0\}$.  The inverse Stieltjes transform $m_{N}^{-1}$ maps $(0,\infty)$ to $(\lambda_{1}(H_N),\infty)$, and $(-\infty, 0)$ to $(-\infty, \lambda_{N}(H_N))$.

\item $W_N=\frac{1}{\sqrt{N}}[w_{ij}]_{1\leq i,j\leq N}$ is the Wigner ensemble. $w_{ij}$'s are i.i.d. random variables under the symmetric constraint $W_N=W_N^*$, with mean zero and variance one:
\begin{align*}
\EE[w_{ij}]=0,\quad \EE[w_{ij}^2]=1.
\end{align*} 
Moreover, for technical reason, we assume that $w_{ij}$'s satisfy logarithmic Sobolev inequality with constant $\gamma$. Recall that a random variable $w_{ij}$ satisfies the logarithmic Sobolev inequality with constant $\gamma$ if for any differentiable function $f$,
\begin{align*}
\int f^2(x) \log \frac{f^2(x)}{\int f^2(x) dm(x)} dm(x)\leq 2\gamma \int |f'(x)|^2 dm(x),
\end{align*}
where $dm$ is the law of $w_{ij}$. We refer to \cite[Chapter 4]{GZ} for the concentration of measure properties of measures satisfying logarithmic Sobolev inequality.
\end{enumerate}

The perturbation $P_N$ is an $N\times N$ real symmetric (or complex Hermitian) matrix having rank $M$, where $M=o(N)$. It has $M_1$ positive eigenvalues. We denote its nonzero eigenvalues by $\theta_{1}\geq \theta_2\geq \cdots \geq \theta_{M_1}>0> \theta_{M_1+1}\geq \cdots \geq \theta_{M-1}\geq \theta_{M}$. Let $\tilde{P}_M=\diag\{\theta_1,\theta_2,\cdots, \theta_{M-1}, \theta_M\}$ be the $M\times M$ real diagonal matrix, consisting of the nonzero eigenvalues of $P_N$.

\subsection{Orthogonally (or Unitarily) Independent Case}
In this section we study the eigenvalues and eigenvectors of 
\begin{align*}
\tilde{W}_N=U^*H_NU+P_N.
\end{align*}
Without loss of generality, we can assume that $H_N$ and $P_N$ are diagonal matrices, and $P_N=\diag\{\theta_1,\theta_2,\cdots, \theta_M, 0,0,\cdots, 0\}$. Since we concentrate only on the  extreme eigenvalues of the perturbed matrix $\tilde{W}_N$, in the following we fixed a small universal constant $\delta>0$, and study the eigenvalues of $\tilde{W}_N$ on the spectral domain $(\lambda_N(H_N)-\delta, \lambda_1(H_N)+\delta)^\complement$.

\begin{theorem}{\label{eigfreesum}}
$H_N$ and $P_N$ are defined as in Section \ref{sumdef}, we assume that their norms are bounded by some universal constant $B$, i.e. $\|H_N\|\leq B$, $\|P_N\|\leq B$. We denote the eigenvalues of $\tilde{W}_N=U^*H_NU+P_N$ by $\lambda_{1}(\tilde{W}_N)\geq \lambda_{2}(\tilde{W}_N)\geq \cdots\geq \lambda_{N}(\tilde{W}_N)$. For any $0<\epsilon\leq \delta$ (it may depend on $M$ and $N$), and any $1\leq i\leq M$, if the eigenvalue $\theta_i$ of the perturbation $P_N$ satisfies the following separation condition:
\begin{align}
\label{freesumsep}
\begin{split}
m_N^{-1}(\frac{1}{\theta_i})\geq \lambda_1(H_N)+2\delta,\quad \text{if } \theta_i>0,\\
m_N^{-1}(\frac{1}{\theta_{i}})\leq \lambda_N(H_N)-2\delta,\quad \text{if } \theta_i<0,
\end{split}
\end{align} 
then it creates an extreme eigenvalue $\tilde{\lambda}_i$ of $\tilde{W}_N$, where $\tilde{\lambda}_i=\lambda_i(\tilde{W}_N)$ for $\theta_i>0$, and $\tilde{\lambda}_i=\lambda_{N-M+i}(\tilde{W}_N)$ for $\theta_i<0$, with high probability,  
\begin{align}\label{freesumloc}
\PP\left(m_N^{-1}(\frac{1}{\theta_i})-\epsilon \leq \tilde{\lambda}_i\leq m_N^{-1}(\frac{1}{\theta_i})+\epsilon\right)\geq 1-C e^{-c_1N\epsilon^2+c_2M}
\end{align}
where $c_1$, $c_2$ and $C$ depend only on $\delta$ and $B$. 
\end{theorem}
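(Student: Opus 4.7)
The plan is to implement the master-equation approach outlined in the introduction. Writing $P_N=E\tilde P_ME^*$ with $E:=[e_1,\ldots,e_M]\in\CC^{N\times M}$ the first $M$ coordinate vectors, and setting $V:=UE$, $R(z):=V^*(zI-H_N)^{-1}V$, Sylvester's determinant identity gives, for $z\notin\sigma(H_N)$,
\[\det(zI-\tilde W_N)=\det(zI-H_N)\,\det D(z),\qquad D(z):=I_M-\tilde P_MR(z).\]
Equivalently, the eigenvalues of $\tilde W_N$ outside $\sigma(H_N)$ are precisely the zeros of $\det\tilde D(z)$ where $\tilde D(z):=\tilde P_M^{-1}-R(z)$; this is the master equation referenced in the introduction. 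The whole task is to locate these zeros to accuracy $\epsilon$.

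For the concentration step, I would exploit that for any fixed unit $u\in\CC^M$ the vector $Vu=U(Eu)$ is uniform on the unit sphere of $\CC^N$ by right-invariance of the Haar measure, so $\EE[u^*R(z)u]=m_N(z)$ exactly. On the spectral domain $\mathrm{dist}(z,\sigma(H_N))\ge\delta$, the map $U\mapsto u^*R(z)u$ is $O(1/\delta)$-Lipschitz, so the log-Sobolev inequality for Haar measure on $O(N)$ (resp.\ $U(N)$), whose constant is $O(1/N)$, yields $\PP(|u^*R(z)u-m_N(z)|>\epsilon)\le Ce^{-cN\epsilon^2}$. Covering the $\CC^M$-sphere by an $\epsilon$-net of cardinality $\le e^{c_2M}$, and a polynomial net in $z$ together with the Lipschitz bound $\|R'(z)\|\lesssim \delta^{-2}$, a union bound upgrades this to
\[\|R(z)-m_N(z)I_M\|_{\mathrm{op}}\le\epsilon\qquad\text{uniformly on the spectral domain,}\]
with probability at least $1-Ce^{-c_1N\epsilon^2+c_2M}$, which is exactly the probability form appearing in \eqref{freesumloc}.

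The last step extracts the eigenvalue locations by matrix monotonicity. On $(\lambda_1(H_N),\infty)$ one has $R'(z)=-V^*(zI-H_N)^{-2}V\preceq 0$, hence $\tilde D(z)$ is matrix-monotone increasing and each of its sorted eigenvalues is non-decreasing in $z$ by min-max. On the good event of the previous step, each eigenvalue of $\tilde D(z)$ is $\epsilon$-close to some $1/\theta_k-m_N(z)$. For positive $\theta_i$ satisfying \eqref{freesumsep}, the deterministic profile $z\mapsto 1/\theta_i-m_N(z)$ vanishes at $z_i:=m_N^{-1}(1/\theta_i)$ with $|m_N'(z_i)|\ge m_N(z_i)^2=1/\theta_i^2\ge 1/B^2$ by Cauchy--Schwarz applied to $m_N'=-N^{-1}\sum_j(z-\lambda_j(H_N))^{-2}$. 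Hence the corresponding eigenvalue of $\tilde D(z)$ is strictly positive at $z_i+\epsilon$ and strictly negative at $z_i-\epsilon$ (up to constants depending only on $B$), producing exactly one zero crossing in $[z_i-\epsilon,z_i+\epsilon]$, i.e.\ an eigenvalue of $\tilde W_N$ in this window. Negative $\theta_k$'s contribute no crossings in this region because $1/\theta_k-m_N(z)<0$ throughout $(\lambda_1(H_N),\infty)$, and Weyl's eigenvalue interlacing (using that $P_N$ has $M_1$ positive eigenvalues) identifies this crossing with $\lambda_i(\tilde W_N)$. A symmetric argument on $(-\infty,\lambda_N(H_N))$ handles the negative $\theta_i$.

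The hard part is that $M$ is permitted to grow with $N$: the $e^{c_2M}$ cost from netting the $\CC^M$-sphere must be absorbed by the concentration gain $e^{-c_1N\epsilon^2}$, which is precisely the shape of the bound in \eqref{freesumloc} and explains the restriction to scales with $N\epsilon^2\gg M$. In contrast to the fixed-rank case, one cannot simply pass to a scalar limit of $D(z)$; the operator monotonicity of the $M$-dimensional $\tilde D(z)$ is what converts an operator-norm estimate into sharp, entry-wise zero location, and is the crux of the proof.
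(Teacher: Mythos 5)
Your proposal is correct and follows essentially the same route as the paper: the same Sylvester/master-equation reduction, the same operator-norm concentration of $\tilde U^*(z-H_N)^{-1}\tilde U$ around $m_N(z)I_M$ via an $\epsilon$-net of the $M$-sphere (giving exactly the $e^{-cN\xi^2+M\ln 7}$-type bound), the same matrix-monotonicity of the $M\times M$ block, and the same Jensen/Cauchy--Schwarz lower bound $|m_N'|\ge m_N^2$ to make the zero crossing sharp. The only cosmetic difference is that the paper tracks eigenvalue identities through a counting function $\mathfrak n(z)$ and takes the concentration radius to be $\epsilon/(8B^2)$ explicitly (cf.\ Proposition \ref{inequalitySandT}), where you phrase it as a zero-crossing argument and absorb the $B$-dependent rescaling into the constants; both are the same argument.
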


\begin{proof}
The eigenvalues of $U^*H_NU+P_N$ are the solutions of the equation
\begin{align*}
\det (z-H_N-UP_NU^*)= 0.
\end{align*}
For $z$ on the spectral domain $(\lambda_N(H_N)-\delta, \lambda_1(H_N)+\delta)^\complement$, $z-H_N$ is invertible, we have
\begin{align*}
\det(z-H_N-UP_NU^*)
=\det(z-H_N)\det (I-(z-H_N)^{-1} U P_N U^*)\\
=\det(z-H_N)\det (I-\tilde{U}^{*}(z-H_N)^{-1} \tilde{U} \tilde{P}_M),
\end{align*}
where $\tilde{U}$ is the $N\times M$ matrix, consisting of the first $M$ columns of $U$. Therefore the extreme eigenvalues of $U^*H_NU+P_N$ are solutions of the equation, 
\begin{align}
\label{mastereq}\det (\tilde{P}_M^{-1}-\tilde{U}^{*}(z-H_N)^{-1}\tilde{U})= 0.
\end{align}
In the following, we prove (\ref{freesumloc}) for the extreme eigenvalues on the interval $[\lambda_1(H_N)+\delta, +\infty)$. Denote $D(z)=\tilde{P}_M^{-1}-\tilde{U}^{*}(z-H_N)^{-1}\tilde{U}$, which is a real symmetric (or Hermitian) matrix. We can parametrize its eigenvalues as
\begin{align*}
\tau_1(z)\geq \tau_2(z)\geq \cdots\geq \tau_{M-1}(z)\geq \tau_M(z),\quad z\in [\lambda_1(H_N)+\delta, +\infty).
\end{align*}
Notice that for $z>z'\geq\lambda_1(H_N)+\delta$, we have
\begin{align*}
D(z)-D(z')=&\tilde{U}^*\left( (z'-H_N)^{-1}-(z-H_N)^{-1}\right)\tilde{U}\\
               =&(z-z')\tilde{U}^*(z'-H_N)^{-1}(z-H_N)^{-1}\tilde{U}.
\end{align*}
Therefore $D(z)-D(z')$ is positive definite. The following Lemma implies that, for $i=1,2\cdots, M$, $\tau_{i}(z)$'s are all increasing functions of $z$ on the interval $[\lambda_1(H_N)+\delta,+\infty)$.
\begin{lemma}{\label{comparison}}
Let $A$ and $B$ be two $N\times N$ real symmetric (or Hermitian) matrices, such that $A-B$ is positive definite. Denote the $n$-th maximal eigenvalues of $A$ and $B$ as $\lambda_{n}(A)$ and $\lambda_n(B)$ respectively, then we have
\begin{align*}
\lambda_{n}(A)> \lambda_n(B),\quad n=1,2,3,\cdots, N.
\end{align*}
\end{lemma}
\begin{proof}
Take the space $W$ spanned by the eigenvectors of $B$ corresponding to its first $n$ eigenvalues.
By the minimax principle,
\begin{align*}
\lambda_{n}(A)=\max_{\dim V=n}\min_{v\in V} v^*Av\geq \min_{v\in W}v^*Av.
\end{align*}
Denote $A|_W$ and $B|_W$ the restriction of $A$ and $B$ on the space $W$, then we still have that $A|_W-B|_W$ is positive definite. Therefore
\begin{align*}
\min_{v\in W}v^*Av=\min_{v\in W}v^*A|_Wv> \min_{v\in W}v^*B|_Wv=\lambda_n(B).
\end{align*}
\end{proof}
\begin{remark}
If we only have that $A-B$ is positive semi-definite, then one can still show that $\lambda_n(A)\geq \lambda_n(B)$.
\end{remark}
The determinant of $D(z)$ is the product of its eigenvalues,
\begin{align*}
\det(\tilde{P}_M^{-1}-\tilde{U}^*(z-H_N)^{-1}\tilde{U})=\prod_{i=1}^{M}\tau_i(z).
\end{align*}
The determinant vanishes, if and only if some eigenvalue vanishes. Therefore $z\in [\lambda_1(H_N)+\delta,+\infty)$ corresponds to an extreme eigenvalue of $U^*H_NU+P_N$ if and only if $\tau_i(z)=0$ for some $i$. Consider the point process $\{\tau_1(z),\tau_2(z), \cdots, \tau_M(z)\}$ as $z$ goes from $+\infty$ down to $\lambda_1(H_N)+\delta$. For $z$ sufficiently large, such that $z-\lambda_{1}(H_N)>{\theta_{1}}$,
we have $0<\tilde{U}^*(z-H_N)^{-1}\tilde{U}<\theta_1^{-1}$, which is smaller than the smallest positive eigenvalue of $\tilde{P}^{-1}_N$. Therefore  for  $z>\lambda_1(H_N)+\theta_1$, $\tilde{P}_M^{-1}-\tilde{U}^*(z-H_N)^{-1}\tilde{U}$ has $M_1$ positive eigenvalues: $\tau_1(z)\geq\tau_2(z)\geq\cdots\geq \tau_{M_1}(z)>0>\tau_{M_1+1}(z)\geq \cdots \geq \tau_{M}(z).$
As $z$ goes from $+\infty$ down to $\lambda_1(H_N)+\delta$, all the eigenvalues $\tau_i(z)$'s will decrease. At some point $z=z_1$, $\tau_{M_1}(z_1)$ becomes zero, this $z_1$ is exactly the largest extreme eigenvalue $\lambda_1(\tilde{W}_N)$. Later at some point $z=z_2$, $\tau_{M_1-1}(z_2)$ becomes zero, this $z_2$ is the second largest extreme eigenvalue $\lambda_2(\tilde{W}_N)$. We may now continue in this manner, sequentially get all the extreme eigenvalues of $\tilde{W}_N$ on the interval $[\lambda_1(H_N)+\delta, +\infty)$.
 
More formally, we introduce the counting function:
\begin{align*}
\mf{n}(z)=\#\{i\in \{1,2,\cdots, M\}: \tau_i(z)\geq 0\}.
\end{align*}
Since $\tau_i(z)$'s are all strictly increasing function of $z$ on the interval $[\lambda_1(H_N)+\delta, +\infty)$, $\mf{n}(z)$ is a right continuous, non-decreasing function. The $i$-th largest extreme eigenvalue of $\tilde{W}_N$ is given by
\begin{align*}
\lambda_i(\tilde{W}_N)=\inf_{z\in [\lambda_1+\delta, +\infty) }\{z: \mf{n}(z)\geq M_1-i+1\},
\end{align*} 
if the set on the lefthand side is not empty. Therefore the counting function $\mf{n}(z)$ can be used to detect the location of the $i$-th largest eigenvalue of $\tilde{W}_N$. More precisely, if we can find numbers $L$ and $R$, such that $\lambda_{1}(H_N)+\delta\leq L<R$ and
\begin{align}{\label{criterion}}
\mf{n}(L)\leq M_1-i,\quad \mf{n}(R)\geq M_1-i+1,
\end{align}
then we can conclude from the above argument that $\lambda_i(\tilde{W}_N)\in (L, R]$.

In the following, we prove (\ref{freesumloc}). For any $\epsilon>0$, we take
\begin{align*}
L=m_N^{-1}(\frac{1}{\theta_i})-\epsilon,\quad R=m_N^{-1}(\frac{1}{\theta_i})+\epsilon.
\end{align*}
in  (\ref{criterion}). Then we use Proposition \ref{orthogonalcase}, which states that roughly $\tilde{U}^{*}(z-H_N)^{-1}\tilde{U}\approx m_N(z)I_M$. If we take $A=(L-H_N)^{-1}$ and $\xi=\frac{\epsilon}{8B^2}$ (recall that the norm of $H_N$ and $P_N$ are bounded by the constant $B$) in Proposition \ref{orthogonalcase}, we get
\begin{align}
\notag\tilde{P}_M^{-1}-\tilde{U}^{*}(L-H_N)^{-1}\tilde{U}
\leq& \tilde{P}_M^{-1}-(m_N(L)-\frac{\epsilon}{8B^2})I_M\\
\label{needbound}\leq& \tilde{P}_M^{-1}-(\frac{1}{\theta_i}+\frac{\epsilon}{4B^2}-\frac{\epsilon}{8B^2})I_M\\
\notag=&  \tilde{P}_M^{-1}-(\frac{1}{\theta_i}+\frac{\epsilon}{8B^2})I_M,
\end{align}
with probability at least $1-Ce^{-c_1N\epsilon^2+c_2M}$, where $C$, $c_1$ and $c_2$ depend only on $B$ and $\delta$ (as in \eqref{freesumsep}). One can find the proof for the inequality (\ref{needbound}) in the Appendix Proposition \ref{inequalitySandT}. Since $\tilde{P}_M^{-1}-(\frac{1}{\theta_i}+\frac{\epsilon}{8B^2})I_M$ has at most $M_1-i$ non-negative eigenvalues, we know that $\mf{n}(L)\leq M_1-i$ with high probability. Similarly, if we take $A=(R-H_N)^{-1}$ and $\xi=\frac{\epsilon}{8B^2}$ in Proposition \ref{orthogonalcase}, by the same argument, we get that $\mf{n}(R)\geq M_1-i+1$ with probability at least $1-Ce^{-c_1N\epsilon^2+c_2M}$. Therefore we can conclude that the  $i$-th largest eigenvalue of $\tilde{W}_N$ is between $L$ and $R$ with exponentially high probability. This finishes the proof of (\ref{freesumloc}).
\end{proof}
\begin{remark}
Since in our setting, the rank of perturbation $P_N$ is much smaller than $N$, $M=o(N)$, (\ref{freesumloc}) in fact gives us the location of extreme eigenvalues of the perturbed matrix $\tilde{W}_N$ with exponentially high probability. Moreover, (\ref{freesumloc}) implies that the fluctuations of the extreme eigenvalues is at most of order $\sqrt{\frac{M}{N}}$.
\end{remark}

\begin{remark}
If we take $M$ as a fixed number, then Theorem \ref{eigfreesum} uncovers the well-known result of the finite rank perturbation of random matrices. Moreover, in this case, Theorem \ref{eigfreesum} gives us the correct order of the fluctuation of the extreme eigenvalues. In fact it is proved in \cite{BGMa} that the extreme eigenvalues exhibit the Gaussian fluctuation with order $N^{-\frac{1}{2}}$, and the joint distribution of normalized extreme eigenvalues converges to the law of the eigenvalues of independent GUE or GOE random matrices. However we do not know how to generalize their method to give the joint law of the extreme eigenvalues in mesoscopic case. 
\end{remark}

\begin{remark}
If there exists a large gap between two adjacent eigenvalues of $H_N$, i.e. $\lambda_j(H_N)-\lambda_{j+1}(H_N)\gg \delta$ for some $j$, Theorem \ref{eigfreesum} can be adapted to describe the extreme eigenvalues on the interval $(\lambda_{j+1}(H_N), \lambda_j(H_N))$. In fact the Stieltjes transform $m_N(z)$ of the empirical eigenvalue distribution of $H_N$ is well defined on $(\lambda_{j+1}(H_N), \lambda_j(H_N))$, and is strictly decreasing. Therefore for any eigenvalue $\theta_i$ of the perturbation $P_N$, under certain separation condition analogue to (\ref{freesumsep}), it will create an extreme eigenvalue around $m_N(\frac{1}{\theta_i})\in (\lambda_{j+1}(H_N), \lambda_j(H_N) )$ with high probability. This comment applies to the other theorems in this paper.
\end{remark}

\begin{theorem}{\label{evfreesum}}
$H_N$ and $P_N$ are defined as in Section \ref{sumdef}, we assume that their norms are bounded by some universal constant $B$, i.e. $\|H_N\|\leq B$, $\|P_N\|\leq B$. We denote the eigenvalues of $\tilde{W}_N=U^*H_NU+P_N$ by $\lambda_{1}(\tilde{W}_N)\geq \lambda_{2}(\tilde{W}_N)\geq \cdots\geq \lambda_{N}(\tilde{W}_N)$. For any $1\leq i\leq M$, such that the eigenvalue $\theta_i$ of the perturbation $P_N$ satisfies the separation condition (\ref{freesumsep}), with high probability, it will create an extreme eigenvalue $\tilde{\lambda}_i$, where $\tilde{\lambda}_i=\lambda_i(\tilde{W}_N)$ for $\theta_i>0$, and $\tilde{\lambda}_i=\lambda_{N-M+i}(\tilde{W}_N)$ for $\theta_i<0$. We denote the corresponding eigenvector by $v_i$ (if $\tilde{\lambda}_i$ is not a simple eigenvalue, $v_i$ can be any of its eigenvectors).  We denote the projection of $v_i$ on the eigenspace of  the nonzero eigenvalues of $P_N$ by $\tilde{v}_i$, which is the projection of $v_i$ on the first $M$ coordinates. For any $0\leq \epsilon\leq\delta$, which may depend on $M$ and $N$, with probability at least $1-C e^{-c_1N\epsilon^2+c_2M}$, where $c_1$, $c_2$ and $C$ depend only on $\delta$ and $B$, we have
\begin{enumerate}
\item The norm of $\tilde{v}_i$ satisfies
\begin{align}
\label{proj}\left|\left\|\tilde{v}_i\right\|^2+\frac{1}{\theta_i^2m_N'(m_N^{-1}(\frac{1}{\theta_i}))}\right|\leq \epsilon C_{B,\delta}.
\end{align}

\item The projection $\tilde{v}_i$ is an approximate eigenvector of $\tilde{P}_M$ with eigenvalue $\theta_i$,
\begin{align}
\label{loc}\left\|\tilde{P}_M \tilde{v}_i-\theta_i\tilde{v}_i \right\|\leq \epsilon C_{B,\delta},
\end{align}
\end{enumerate}
where $C_{B,\delta}$ depends on $\delta$ and $B$.
\end{theorem}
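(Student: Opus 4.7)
The plan is to push the master-equation derivation of Theorem \ref{eigfreesum} one step further: instead of locating zeros of $\det D(z)$, I would describe the null vectors. Starting from $(U^*H_NU+P_N)v_i=\tilde\lambda_i v_i$ and using that $\tilde\lambda_i-U^*H_NU$ is invertible on the spectral domain, I would write
\begin{align*}
v_i=(\tilde\lambda_i-U^*H_NU)^{-1}P_N v_i=(\tilde\lambda_i-U^*H_NU)^{-1}E\tilde P_M\tilde v_i,
\end{align*}
where $E$ is the $N\times M$ matrix of the first $M$ standard basis vectors, $\tilde v_i=E^*v_i$, and $P_N=E\tilde P_M E^*$. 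Applying $E^*$ on the left and using $E^*(z-U^*H_NU)^{-1}E=\tilde U^*(z-H_N)^{-1}\tilde U$ yields the eigenvector master equation
\begin{align*}
D(\tilde\lambda_i)\hat v_i=0,\qquad \hat v_i:=\tilde P_M\tilde v_i,
\end{align*}
with the same $D(z)$ that controlled the location of $\tilde\lambda_i$.

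For (\ref{loc}), I would apply Proposition \ref{orthogonalcase} at the deterministic point $z_0=m_N^{-1}(1/\theta_i)$ with parameter of order $\epsilon$ to get $\|\tilde U^*(z_0-H_N)^{-1}\tilde U-m_N(z_0)I_M\|\le C\epsilon$ on an event of probability at least $1-Ce^{-c_1N\epsilon^2+c_2M}$. On the intersection with the event of Theorem \ref{eigfreesum}, $|\tilde\lambda_i-z_0|\le\epsilon$; combined with the Lipschitz bounds $\|\partial_z\tilde U^*(z-H_N)^{-1}\tilde U\|\le\delta^{-2}$ and $|m_N'(z)|\le\delta^{-2}$ on the spectral domain, this transfers to
\begin{align*}
\bigl\|\tilde U^*(\tilde\lambda_i-H_N)^{-1}\tilde U-\tfrac{1}{\theta_i}I_M\bigr\|\le C_{B,\delta}\,\epsilon.
\end{align*}
Substituting into the master equation gives $\|(\tilde P_M^{-1}-\tfrac{1}{\theta_i}I_M)\hat v_i\|\le C_{B,\delta}\,\epsilon\,\|\hat v_i\|$, and the algebraic identity $\tilde P_M\tilde v_i-\theta_i\tilde v_i=-\theta_i(\tilde P_M^{-1}-\tfrac{1}{\theta_i}I_M)\hat v_i$ together with $|\theta_i|\le B$ and $\|\hat v_i\|\le B$ yields (\ref{loc}).

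For (\ref{proj}) I would use the unit-norm condition $\|v_i\|=1$ and the explicit formula for $v_i$ to obtain $1=\hat v_i^*\tilde U^*(\tilde\lambda_i-H_N)^{-2}\tilde U\hat v_i$. Since $\tilde U^*(z-H_N)^{-2}\tilde U=-\partial_z\tilde U^*(z-H_N)^{-1}\tilde U$ and its scalar analogue is $-m_N'(z)$, an application of Proposition \ref{orthogonalcase} to $A=(z_0-H_N)^{-2}$ (whose normalized trace is $-m_N'(z_0)$) plus the same Lipschitz transfer produces $\|\tilde U^*(\tilde\lambda_i-H_N)^{-2}\tilde U+m_N'(\tilde\lambda_i)I_M\|\le C_{B,\delta}\,\epsilon$. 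Hence $1=-m_N'(\tilde\lambda_i)\|\hat v_i\|^2+O(\epsilon)$, and (\ref{loc}) combined with $\|\tilde P_M\tilde v_i-\theta_i\tilde v_i\|\le C_{B,\delta}\epsilon$ gives $\|\hat v_i\|^2=\theta_i^2\|\tilde v_i\|^2+O(\epsilon)$. Putting these together yields $\|\tilde v_i\|^2=-1/(\theta_i^2 m_N'(\tilde\lambda_i))+O(\epsilon)$, and replacing $\tilde\lambda_i$ by $m_N^{-1}(1/\theta_i)$ costs another $O(\epsilon)$ by Lipschitz continuity of $m_N'$, producing (\ref{proj}).

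The main obstacle is that Proposition \ref{orthogonalcase} provides concentration at a deterministic $z$, whereas the two approximations above are needed at the random point $z=\tilde\lambda_i$. I would resolve this exactly as in Theorem \ref{eigfreesum}: apply the proposition at $z_0=m_N^{-1}(1/\theta_i)$ and transfer the bound to any $z$ within $\epsilon$ of $z_0$ via the derivative estimate $\|\partial_z\tilde U^*(z-H_N)^{-k}\tilde U\|\le k\delta^{-k-1}$, paying only an $O(\epsilon)$ price while preserving the probability $1-Ce^{-c_1N\epsilon^2+c_2M}$. A secondary care point is non-simplicity of $\tilde\lambda_i$ when several $\theta_j$ cluster near $\theta_i$; the argument never uses simplicity, and because (\ref{loc}) only asserts that $\tilde v_i$ is an \emph{approximate} eigenvector of $\tilde P_M$ with eigenvalue $\theta_i$, any choice of eigenvector $v_i$ for the multiple eigenvalue $\tilde\lambda_i$ satisfies both conclusions.
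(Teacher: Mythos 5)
Your proposal is correct and follows essentially the same route as the paper: you derive the eigenvector master equation $(\tilde P_M^{-1}-\tilde U^*(\tilde\lambda_i-H_N)^{-1}\tilde U)\tilde P_M\tilde v_i=0$ together with the normalization identity $\hat v_i^*\tilde U^*(\tilde\lambda_i-H_N)^{-2}\tilde U\hat v_i=1$ (the paper's (\ref{ch1}) and (\ref{ch2})), apply Proposition \ref{orthogonalcase} to replace the resolvent blocks by scalar matrices, and conclude by the same algebraic manipulations. The only difference from the paper is cosmetic: to pass the concentration bound from a deterministic point to the random $\tilde\lambda_i$, you use the deterministic Lipschitz bound on $z\mapsto\tilde U^*(z-H_N)^{-k}\tilde U$ centered at $z_0=m_N^{-1}(1/\theta_i)$, whereas the paper applies Proposition \ref{orthogonalcase} at the two endpoints $L=z_0-\epsilon/K$, $R=z_0+\epsilon/K$ and uses that $\tilde U^*(z-H_N)^{-1}\tilde U$ and $\tilde U^*(z-H_N)^{-2}\tilde U$ are monotone in $z$ to sandwich the value at $\tilde\lambda_i\in[L,R]$; both devices cost only $O(\epsilon)$ and preserve the probability estimate.
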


\begin{proof}
Since $v_i$ is a eigenvector of $U^*H_NU+\tilde{P}_M$ corresponding to the eigenvalue $\tilde{\lambda}_i$,
\begin{align*}
\tilde{\lambda}_iv_i=(U^*H_NU+P_N)v_i.
\end{align*}
This leads to 
\begin{align}
\label{selfev}v_i=U^*(\tilde{\lambda}_i-H_N)^{-1}UP_Nv_i.
\end{align}
We can obtain the following two equations for $\tilde{v}_i$ from (\ref{selfev}). (\ref{ch1}) is from projecting both sides of (\ref{selfev}) on the first $M$ coordinates; and (\ref{ch2}) is from taking the norm square on both sides of (\ref{selfev}).
\begin{align}
\label{ch1}\left(I_M-\tilde{U}^*(\tilde{\lambda}_i-H_N)^{-1}\tilde{U}\tilde{P}_M\right)\tilde{v}_i=0,\\
\label{ch2}\tilde{v}_i^*\tilde{P}_M \left(\tilde{U}^*(\tilde{\lambda}_i-H_N)^{-2}\tilde{U}\right)\tilde{P}_M\tilde{v}_i=1,
\end{align}
where $\tilde{U}$ is the $N\times M$ matrix, consisting of the first $M$ columns of $U$. Approximately, we have $\tilde{U}^*(\tilde{\lambda}_i-H_N)^{-1}\tilde{U}\approx \frac{1}{\theta_i}I_M$ and $\tilde{U}^*(\tilde{\lambda}_i-H_N)^{-2}\tilde{U}\approx -m_N'(m_N^{-1}(\frac{1}{\theta_i}))I_M$, which will simplify the algebraic relations (\ref{ch1}) and (\ref{ch2}). In the following, we will make these approximations more quantitive. We have the following inequalities, by taking $K=\max\{\frac{8}{\delta^2}, \frac{16}{\delta^3}\}$, $L=m_N^{-1}(\frac{1}{\theta_i})-\frac{\epsilon}{K}$, and $R=m_N^{-1}(\frac{1}{\theta_i})+\frac{\epsilon}{K}$ in Proposition \ref{inequalitySandT}, 
\begin{align*}
\frac{1}{\theta_i}-\frac{\epsilon}{2}\leq m_N(R)\leq \frac{1}{\theta_i}-\frac{\epsilon}{4KB^2}
< \frac{1}{\theta_i}+\frac{\epsilon}{4KB^2}\leq m_N(L)\leq \frac{1}{\theta_i}+\frac{\epsilon}{2},\\
-m_N'(m_N^{-1}(\frac{1}{\theta_i}))-\frac{\epsilon}{2}\leq -m_N'(R)\leq -m_N'(L)\leq
-m_N'(m_N^{-1}(\frac{1}{\theta_i}))+\frac{\epsilon}{2}.
\end{align*}
By taking $\xi=\min\{\frac{\epsilon}{2}, \frac{\epsilon}{8KB^2}\}$, and $A=(L-H_N)^{-1}$, $(R-H_N)^{-1}$, $(L-H_N)^{-2}$ and $(R-H_N)^{-2}$ respectively in Proposition \ref{orthogonalcase}, we get 
\begin{align}
\label{eiglocation}(\frac{1}{\theta_i}-\epsilon)I_M\leq \tilde{U}^{*}(R-H_N)^{-1}\tilde{U}< \frac{1}{\theta_i}I_M< \tilde{U}^{*}(L-H_N)^{-1}\tilde{U}\leq (\frac{1}{\theta_i}+\epsilon)I_M,
\end{align}
and
\begin{align}
\notag -(m_N'(m_N^{-1}(\frac{1}{\theta_i}))+\epsilon)I_M&\leq \tilde{U}^{*}(R-H_N)^{-2}\tilde{U}\\
\label{normbound}&\leq \tilde{U}^{*}(L-H_N)^{-2}\tilde{U}\leq-(m_N'(m_N^{-1}(\frac{1}{\theta_i}))-\epsilon)I_M,
\end{align}
with exponentially high probability. We denote the event such that (\ref{eiglocation}) and (\ref{normbound}) hold by $\mc{A}$. In the following we show that (\ref{proj}) and (\ref{loc}) hold on $\mc{A}$. 
The same argument as in the proof of Theorem \ref{eigfreesum}, (\ref{eiglocation}) implies that $\tilde{\lambda}_i\in [L,R]$. Since both $\tilde{U}^{*}(z-H_N)^{-1}\tilde{U}$ and $\tilde{U}^{*}(z-H_N)^{-2}\tilde{U}$ are monotonic as a function of $z$ outside the spectrum of $H_N$. Especially, they are monotonic on $[L,R]$, (\ref{eiglocation}) and (\ref{normbound}) imply
\begin{align}
\label{approx1}(\frac{1}{\theta_i}-\epsilon)I_M\leq& \tilde{U}^{*}(\tilde{\lambda}_i-H_N)^{-1}\tilde{U}\leq (\frac{1}{\theta_i}+\epsilon)I_M,\\
\label{approx2}-(m_N'(m_N^{-1}(\frac{1}{\theta_i}))+\epsilon)I_M\leq& \tilde{U}^{*}(\tilde{\lambda}_i-H_N)^{-2}\tilde{U}\leq
-(m_N'(m_N^{-1}(\frac{1}{\theta_i}))-\epsilon)I_M,
\end{align}
on the event $\mc{A}$. With the estimate (\ref{approx1}), (\ref{ch1}) can be reduced to
\begin{align}
\label{bound1}\left\|\left(I_M-\frac{1}{\theta_i}\tilde{P}_M\right)\tilde{v}_i\right\|\leq \epsilon B,
\end{align}
which gives us (\ref{loc}). For (\ref{ch2}), using the approximation $\tilde{U}^{*}(\tilde{\lambda}_i-H_N)^{-2}\tilde{U}\approx-m_N'(m_N^{-1}(\frac{1}{\theta_i}))I_M$ from (\ref{approx2}), and $\frac{1}{\theta_i}\tilde{P}_M\tilde{v}_i\approx \tilde{v}_i$ from (\ref{bound1}), we get
\begin{align}
\left\|\tilde{v}_i\right\|^2=\left\|\frac{1}{\theta_i}\tilde{P}_M\tilde{v}_i\right\|^2+O_{B,\delta}(\epsilon)=\frac{1}{-\theta_i^2m_N'(m_N^{-1}(\frac{1}{\theta_i}))}+O_{B,\delta}(\epsilon).
\end{align}
This finishes the proof of (\ref{proj}). 
\end{proof}

\subsection{Wigner Case}
In this section we study the eigenvalues of deformed Wigner matrices,
\begin{align*}
\tilde{W}_N=W_N+P_N.
\end{align*}
Since in this case $W_N$ and $P_N$ are no longer orthogonally invariant, we can not assume that $P_N$ is diagonal. However since $W_N$ is permutation invariant, we can still assume that $P_N= V \tilde{P}_M V^*$, where $V$ is an $N\times M$ matrix, consisting of the eigenvectors corresponding to the nonzero eigenvalues of $P_N$. 

It is well known that the empirical eigenvalue distribution of $W_N$ converges to the semi-circle law with density
\begin{align*}
\rho_{sc}(x)=\frac{\sqrt{4-x^2}}{2\pi},\quad \text{for } x\in [-2,2].
\end{align*}
We denote its Stieltjes transform as
\begin{align*}
m_{sc}(z)=\frac{z-\text{sgn}(z)\sqrt{z^2-4}}{2}, \quad \text{for } z\in (-\infty, -2]\cup [2,+\infty).
\end{align*}
In the following we fix a small universal constant $\delta>0$. It is known that with high probability the eigenvalues of Wigner matrix $W_N$ are in $(-2-\frac{\delta}{2}, 2+\frac{\delta}{2})$. Since we concentrate only on the  extreme eigenvalues of the perturbed matrix $\tilde{W}_N$,  we study the eigenvalues of $\tilde{W}_N$ on the spectral domain $(-2-\delta, 2+\delta)^\complement$.

\begin{theorem}{\label{wigsum}}
Wigner matrix $W_N$ and perturbation $P_N$ are defined as in Section \ref{sumdef}, we assume the norm of $P_N$ is bounded by some universal constant $B$, i.e. $\|P_N\|\leq B$. We denote the eigenvalues of $\tilde{W}_N=W_N+P_N$ by $\lambda_{1}(\tilde{W}_N)\geq \lambda_{2}(\tilde{W}_N)\geq \cdots\geq \lambda_{N}(\tilde{W}_N)$. For any $\sqrt{\frac{M}{N}}\ll\epsilon\leq \delta$ (it may depend on $M$ and $N$), and any $1\leq i\leq M$, if $\theta_i$ satisfies the following separation condition
\begin{align}
\label{wigsep}|\theta_i|\geq 1+2\delta,
\end{align} 
then it will create an extreme eigenvalue $\tilde{\lambda}_i$ of $\tilde{W}_N$, where $\tilde{\lambda}_i=\lambda_i(\tilde{W}_N)$ for $\theta_i>0$, and $\tilde{\lambda}_i=\lambda_{N-M+i}(\tilde{W}_N)$ for $\theta_i<0$, with high probability,  
\begin{align}\label{wigloc}
\PP\left(\theta_i+\frac{1}{\theta_i}-\epsilon \leq \tilde{\lambda}_i\leq \theta_i+\frac{1}{\theta_i}+\epsilon\right)\geq 1-C e^{-c_1N\epsilon^2+c_2M}
\end{align}
where $c_1$, $c_2$ and $C$ depend only on $\delta$, $B$ and the logarithmic Sobolev constant $\gamma$. 
\end{theorem}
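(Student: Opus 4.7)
The plan is to mirror the proof of Theorem \ref{eigfreesum}, replacing the Haar-resolvent quadratic form by its Wigner analogue. With $P_N=V\tilde{P}_M V^*$, $V$ being the $N\times M$ matrix of nontrivial eigenvectors of $P_N$, the Weinstein--Aronszajn identity on the spectral domain $(-2-\delta,2+\delta)^\complement$ gives
\[
\det(z-W_N-V\tilde{P}_M V^*)=\det(z-W_N)\,\det\tilde{P}_M\,\det\!\bigl(\tilde{P}_M^{-1}-V^*(z-W_N)^{-1}V\bigr),
\]
so that the extreme eigenvalues of $\tilde{W}_N$ are the zeros of $\det D(z)=0$ with $D(z)=\tilde{P}_M^{-1}-V^*(z-W_N)^{-1}V$, provided $z-W_N$ is invertible; this holds with overwhelming probability on the spectral domain by the standard concentration of $\|W_N\|$ around $2$ under the logarithmic Sobolev hypothesis. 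Exactly as in the proof of Theorem \ref{eigfreesum}, the difference $D(z)-D(z')$ is positive definite for $z>z'$ in the spectral domain, so the eigenvalues $\tau_1(z)\geq\cdots\geq\tau_M(z)$ of $D(z)$ are strictly increasing, and the counting function $\mf{n}(z)$ together with the criterion \eqref{criterion} will locate the extreme eigenvalues once one sandwiches $D(L)$ and $D(R)$ by appropriate deterministic multiples of $I_M$.

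The essential new input, playing the role of Proposition \ref{orthogonalcase}, is the concentration estimate
\[
\bigl\|V^*(z-W_N)^{-1}V-m_{sc}(z)I_M\bigr\|\leq \xi
\]
on the spectral domain, with probability at least $1-Ce^{-c_1 N\xi^2+c_2 M}$. This is the main obstacle. My strategy is to derive it from the logarithmic Sobolev inequality: on the event $\|W_N\|\leq 2+\delta/2$, each entry $V_k^*(z-W_N)^{-1}V_\ell$ is a smooth function of the underlying i.i.d.\ variables $\{w_{pq}\}$ with Euclidean gradient of norm $O(N^{-1/2}\delta^{-2})$, since $\partial G_{ij}/\partial w_{pq}=-N^{-1/2}G_{ip}G_{qj}$ up to symmetrization and $\|G(z)\|\leq\delta^{-1}$ on that event. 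Herbst's lemma then yields subgaussian concentration on scale $N^{-1/2}$ for each entry, and a union bound over the $M^2$ pairs $(k,\ell)$ accounts for the $e^{c_2M}$ factor. The expectation $\EE[V^*(z-W_N)^{-1}V]$ is close to $m_{sc}(z)I_M$ with bias $O(N^{-1})$ by a standard resolvent expansion or the isotropic local semicircle law, and this bias is negligible on the scale $\epsilon\gg\sqrt{M/N}$.

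Given this concentration estimate, the rest of the argument transcribes the proof of Theorem \ref{eigfreesum}. For $\theta_i>0$ satisfying \eqref{wigsep}, the identity $m_{sc}(\theta_i+1/\theta_i)=1/\theta_i$ together with a uniform lower bound $|m_{sc}'|\geq c(\delta,B)>0$ on a neighborhood of $\theta_i+1/\theta_i$ produces quantitative gaps $m_{sc}(L)-1/\theta_i\gtrsim\epsilon$ and $1/\theta_i-m_{sc}(R)\gtrsim\epsilon$ for the endpoints $L=\theta_i+1/\theta_i-\epsilon$ and $R=\theta_i+1/\theta_i+\epsilon$. Choosing $\xi$ as a small constant multiple of $\epsilon$ in the concentration estimate yields the operator inequalities
\[
D(L)\leq\tilde{P}_M^{-1}-(1/\theta_i+c'\epsilon)I_M,\qquad D(R)\geq\tilde{P}_M^{-1}-(1/\theta_i-c'\epsilon)I_M,
\]
which force $\mf{n}(L)\leq M_1-i$ and $\mf{n}(R)\geq M_1-i+1$ on the high-probability event, and hence $\tilde{\lambda}_i\in[L,R]$ by \eqref{criterion}. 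The case $\theta_i<0$ is identical on the left endpoint, and the hypothesis $\sqrt{M/N}\ll\epsilon$ ensures that $-c_1N\epsilon^2+c_2M$ is negative so \eqref{wigloc} is a nontrivial exponential bound.
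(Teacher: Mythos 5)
Your overall plan is the same as the paper's: the Weinstein--Aronszajn factorization, the monotonicity of $D(z)$, the counting function $\mf{n}(z)$, the sandwich $D(L),D(R)$ between shifts of $\tilde P_M^{-1}$, and the identity $m_{sc}(\theta+1/\theta)=1/\theta$ are all exactly what the paper does. The paper also isolates the Wigner analogue of Proposition~\ref{orthogonalcase} as its own statement (Proposition~\ref{wignerconcen}) and appeals to the isotropic local law of Knowles--Yin for the bias, as you propose.

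The gap is in your derivation of the concentration estimate
$\bigl\|V^*(z-W_N)^{-1}V-m_{sc}(z)I_M\bigr\|\leq\xi$
with probability $1-Ce^{-c_1N\xi^2+c_2M}$. You propose to establish concentration for each of the $M^2$ entries $V_k^*(z-W_N)^{-1}V_\ell$ (which does work: the gradient bound $O(N^{-1/2}\delta^{-2})$ and Herbst's lemma give subgaussian tails on scale $N^{-1/2}$ per entry, after the standard extension of the Lipschitz function off the good set $\{\|W_N\|\leq 2+\delta/2\}$), and then take a union bound over the $M^2$ pairs. But entrywise bounds do not control the operator norm without loss: if every entry of an $M\times M$ Hermitian matrix is $\leq\xi'$ in modulus, the operator norm can still be as large as $M\xi'$ (e.g.\ the all-ones matrix), so you must take $\xi'=\xi/M$, which changes the exponent from $-cN\xi^2$ to $-cN\xi^2/M^2$. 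Combined with the union bound this gives $1-CM^2e^{-cN\xi^2/M^2}$, which is vacuous unless $\xi\gg M\sqrt{\log M/N}$ --- a far stronger condition than the theorem's $\epsilon\gg\sqrt{M/N}$. The correct (and the paper's) route is: prove concentration for a single deterministic unit vector $u\in S^{N-1}$, i.e.\ for $u^*(z-W_N)^{-1}u-m_{sc}(z)$, then use the fact that $\sup_{v\in S^{M-1}}|v^*(V^*(z-W_N)^{-1}V-m_{sc}I_M)v|=\sup_{v\in S^{M-1}}|(Vv)^*((z-W_N)^{-1}-m_{sc}I_N)(Vv)|$ ranges only over an $M$-dimensional sphere, and replace that supremum by a maximum over a $\tfrac13$-net of cardinality $\leq 7^M$. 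This is what produces the additive $M\ln 7$ in the exponent rather than the multiplicative $M^2$ loss, and it is exactly Proposition~\ref{orthogonalcase}'s epsilon-net step transplanted to the Wigner setting. Replacing your entrywise union bound with this single-direction-plus-net argument closes the gap; the rest of your proof is sound.
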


\begin{proof}
The proof is similar to the proof of the orthogonally independent case. In fact, if we condition on $\|W_N\|\leq 2+\frac{\delta}{2}$, then for any $z\in (-2-\delta, 2+\delta)^\complement$, we have 
\begin{align*}
\det(z- W_N-P_N)=\det(z-W_N)\det(\tilde{P}_M)\det(\tilde{P}_M^{-1}-V^* (z-W_N)^{-1}V)
\end{align*}
We can go through exactly the same argument as in Theorem \ref{eigfreesum} if we have the following proposition, which will take place the role of Proposition \ref{orthogonalcase} in the proof of orthogonally independent case.
\end{proof}
\begin{proposition}{\label{wignerconcen}}
$W_N=\frac{1}{\sqrt{N}}[w_{ij}]_{1\leq i,j\leq N}$ is the Wigner ensemble as in Section \ref{sumdef}. We denote the set of symmetric matrices $D=\{H\in \mbf{M}_{N\times N}(\RR): \|H\|\leq 2+\frac{\delta}{2}, H=H^*\}$. Then for any $z\in (-2-\delta, 2+\delta)^{\complement}$, $\xi>0$, and $N\times M$ matrix $V$ with orthonormal columns, we have
\begin{align}\label{iidconcen}
\PP\left(W_N\in D \text{ and }\left\|V^*(z-W_N)^{-1}V- m_{sc}(z)I_M\right\|\leq \xi\right)\geq 1- Ce^{-cN\xi^2+M\ln 7} 
\end{align}
where $c$ and $C$ depend on $\delta$ and the logarithmic Sobolev constant $\gamma$.
\end{proposition}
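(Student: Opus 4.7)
The plan is to reduce the operator-norm bound to a family of scalar concentration estimates via an $\epsilon$-net argument, then establish each scalar bound by combining a log-Sobolev/Herbst concentration inequality with a standard estimate of the mean of a quadratic form in the resolvent. Fix a $(1/3)$-net $\mathcal{N}$ of the unit sphere in $\RR^M$ of cardinality at most $7^M$; this is the source of the $M\ln 7$ factor. For any self-adjoint $M\times M$ matrix $A$ one has $\|A\| \leq 3\sup_{u\in\mathcal{N}}|u^*Au|$, so applied to $A = V^*(z-W_N)^{-1}V - m_{sc}(z)I_M$ it suffices to show that for each unit vector $v = Vu$ with $u\in\mathcal{N}$,
\begin{equation*}
\PP\bigl(W_N\in D \text{ and } |v^*(z-W_N)^{-1}v - m_{sc}(z)| > \xi/3\bigr) \leq Ce^{-cN\xi^2},
\end{equation*}
with constants $c, C$ depending only on $\delta$ and $\gamma$, and then apply a union bound over $\mathcal{N}$.

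For the scalar bound, set $f_v(W) := v^*(z-W)^{-1}v$ and split
\begin{equation*}
f_v(W_N) - m_{sc}(z) = \bigl(f_v(W_N) - \EE f_v(W_N)\bigr) + \bigl(\EE f_v(W_N) - m_{sc}(z)\bigr).
\end{equation*}
On $D$ one has $\|(z-W)^{-1}\|\leq 2/\delta$; with $g := (z-W)^{-1}v$, a direct computation of $\partial f_v/\partial w_{ij}$ using $W_{ij}=w_{ij}/\sqrt{N}$ gives the Frobenius gradient bound $|\nabla f_v|^2 \leq 2\|g\|^4/N \leq 32/(N\delta^4)$. After extending $f_v$ to a globally Lipschitz function by a smooth cutoff supported on a slight enlargement of $D$, Herbst's inequality for the product measure of the $w_{ij}$'s (which satisfies log-Sobolev with constant $\gamma$) yields $\PP(|f_v - \EE f_v| > t)\leq 2\exp(-cN\delta^4 t^2/\gamma)$; setting $t=\xi/6$ absorbs the fluctuation. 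For the bias, since $z$ lies at distance at least $\delta$ from $[-2,2]$, the series $\EE[v^*(z-W_N)^{-1}v] = \sum_{k\geq 0} z^{-k-1}\EE[v^*W_N^k v]$ converges, and a standard Wick-type pair-partition calculation shows $\EE[W_N^{2k}] = C_k I_N + O(N^{-1})$ in operator norm (with $C_k$ the $k$-th Catalan number) and $\EE[W_N^{2k+1}] = O(N^{-1/2})$; summing and using $m_{sc}(z) = \sum_k C_k z^{-2k-1}$ yields $\EE f_v = m_{sc}(z) + O(N^{-1/2})$, which is $\ll \xi/6$ in the only regime where the conclusion is non-vacuous (namely $\xi \gtrsim \sqrt{M/N}$). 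The norm concentration $\PP(\|W_N\|>2+\delta/2)\leq Ce^{-cN\delta^2}$ follows from log-Sobolev applied to the $1$-Lipschitz functional $W\mapsto\|W\|$.

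The main technical obstacle is the Lipschitz extension past $D$: the gradient bound is valid only on $D$, so one must extend $f_v$ globally before invoking Herbst, e.g.\ by multiplying by a smooth cutoff equal to $1$ on $D$ and vanishing just outside, and then checking that this alters the expectation by at most $O(\PP(D^c))$, which is itself exponentially small. Tracking the operator-norm bound $\|\EE[W_N^{2k}] - C_k I_N\|_{op} = O(N^{-1})$ uniformly in $k$ (so that one may sum the Neumann series) is the other careful point, and is handled by noting that non-crossing pair partitions give the Catalan leading term while each crossing or self-contraction contributes an extra factor of $1/N$. Once these are in place, combining the norm concentration, the fluctuation bound, and the bias estimate with the union bound over the $\leq 7^M$ net points produces exactly the claimed tail $Ce^{-cN\xi^2 + M\ln 7}$.
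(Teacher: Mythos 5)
Your overall strategy matches the paper's: a $\frac13$-net reduces the operator-norm bound to scalar quadratic forms; log-Sobolev/Herbst concentration on $D$ controls the fluctuation $f_v-\EE f_v$; a separate deterministic estimate controls the bias $\EE f_v-m_{sc}(z)$; and the norm concentration $\PP(\|W_N\|>2+\delta/2)\le Ce^{-cN\delta^2}$ handles the event $D$. One minor difference is the Lipschitz extension: the paper uses the McShane extension $f(X)=\sup_{Y\in D}\{\tilde f(Y)-\frac{4}{\delta^2}\|Y-X\|_{HS}\}$, which is guaranteed to agree with $\tilde f$ on $D$ and have the same global Lipschitz constant, whereas your smooth cutoff requires choosing the enlargement of $D$ so that $f_v$ is still defined and bounded on the support of the cutoff (it blows up near $\|W\|=|z|$); this can be arranged, but the McShane extension is the cleaner route.

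The genuine divergence, and the gap, is in the bias estimate. The paper cites the isotropic local semicircle law of Knowles--Yin \cite{KYc}, applies it at $z+i\eta$ with $\eta=N^{-1/2}$, and pushes to the real axis using the Lipschitz continuity of $z\mapsto\langle v,(z-W_N)^{-1}v\rangle$ and of $m_{sc}$ away from $[-2,2]$; this yields $|\EE f_v-m_{sc}(z)|=O(N^{-1/2+\epsilon})$ directly. Your replacement by a Neumann/moment expansion is not correct as written. The identity $\EE[v^*(z-W_N)^{-1}v]=\sum_{k\ge 0}z^{-k-1}\EE[v^*W_N^kv]$ is not valid, since $\|W_N\|<|z|$ fails with positive probability, so the resolvent expansion does not hold pathwise and the interchange of sum and expectation is not justified; one must truncate the resolvent at finite order $K$ on $D$, bound the remainder $z^{-K}W_N^K(z-W_N)^{-1}$ there by $((2+\delta/2)/|z|)^K\cdot O(\delta^{-1})$, and treat the complement of $D$ separately. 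More seriously, the assertion $\|\EE[W_N^{2k}]-C_kI_N\|_{op}=O(N^{-1})$ \emph{uniformly in $k$} is false: the deviation from the Catalan term grows with $k$ (each crossing contributes a factor whose combinatorial multiplicity grows polynomially in $k$), so the $O(N^{-1})$ bound only holds for $k$ small relative to $N$. You can salvage the argument by truncating at $K=O(\log N)$, where both the geometric remainder and the moment errors are negligible, but as stated the uniformity claim is a genuine hole; the paper avoids the issue entirely by invoking the isotropic local law.
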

\begin{proof}
Under our assumption, it is well know that $\EE[\|W_N\|]=2$. The norm of a symmetric matrix is a Lipschitz continuous function with Lipschitz constant $2$, i.e. $\|X\|-\|Y\|\leq \|X-Y\|_{HS}$ for any two symmetric matrices $X$ and $Y$. By logarithmic Sobolev inequality we know $\PP(|\|W_N\|-2|\geq \frac{\delta}{2})\leq 1-e^{cN\delta^2}$, where the constant $c$ depends on the logarithmic Sobolev constant $\gamma$. Therefore $W_N\in D$ with exponentially high probability. 

Next we show that for any unit vector $u\in S^{N-1}$,
\begin{align}\label{singlevector}
\PP\left(W_N \in D \text{ and } \left |u^*(z-W_N)^{-1}u- m_{sc}(z)\right |\leq \frac{\xi}{3}\right)\geq 1-Ce^{-cN\xi^2} .
\end{align}
For any $z\in (2-\delta, 2+\delta)^{\complement}$, we define the function $\tilde{f}(X)= u^*(z-X)^{-1}u$ for $X\in D$. Notice that for any $X, Y\in D$, we have
\begin{align*}
|\tilde{f}(X)-\tilde{f}(Y)|=|u^* (z-X)^{-1}(Y-X)(z-Y)^{-1}u|\leq \frac{4}{\delta^2} \|Y-X\|_{HS}.
\end{align*}
Thus $\tilde{f}$ is Lipschitz continuous on $D$. We can extend $\tilde{f}$ to the whole space of symmetric matrices as
\begin{align*}
f(X)=\sup_{Y\in D}\left\{ f(Y)- \frac{4}{\delta^2}\|Y-X\|_{HS}\right\}.
\end{align*}
$f$ agrees with $\tilde{f}$ on $D$ and is a Lipschitz continuous function on the whole space of symmetric matrices. By logarithmic Sobolev inequality we have
\begin{align}{\label{logsob}}
\PP(|f(W_N)-\EE[f(W_N)]|\geq \frac{\xi}{6})\leq e^{-cN \xi^2},
\end{align}
where $c$ depends on $\delta$ and the logarithmic Sobolev constant $\gamma$. Next we show that for $N$ large enough,  $|\EE[f(W_N)]-m_{sc}(z)|\leq \frac{\xi}{6}$. In \cite[Theorem 2.3]{KYc}, Knowles and Yin proved that  for Wigner ensemble $W_N$ such that the entries have uniformly subexponential decay (which is definitely true in our case), for any deterministic vectors $v,w\in S^{n-1}$, the following holds 
\begin{align}
\label{isolaw}|\langle v, (z+i\eta-W_N)^{-1}, w\rangle-m_{sc}(z+i\eta)\langle v, w\rangle|\leq N^{\epsilon}\sqrt{\frac{\text{Im } m_{sc}(z+i\eta)}{N\eta}},
\end{align}
with overwhelming probability for any $\epsilon>0$ and $\eta>0$. Notice that away from the asymptotic spectrum $[-2,2]$, both $\langle v, (z+i\eta-W_N)^{-1}w\rangle$ and $m_{sc}(z+i\eta)$ are Lipschitz continuous with Lipschitz constant $O(\delta^{-2})$. Therefore we are able to extend \eqref{isolaw} to the real axis in our setting, i.e. $z\in [-2-\delta, 2+\delta]^\complement$.
\begin{align*}
&|\EE[f(W_N)]-m_{sc}(z)|\\
\leq&| \EE[\mbf{1}_{D^{\complement}} (f(W_N)-u^*(z+i\eta-W_N)^{-1}u)]|+
|\EE[\mbf{1}_{D} (u^*(z-W_N)^{-1}u-u^*(z+i\eta-W_N)^{-1}u)]|\\
+&|m_{sc}(z+i\eta)-m_{sc}(z))|+|\EE[u^*(z+i\eta -W_N)^{-1}u]-m_{sc}(z+i\eta)|\\
\leq& (c_1+\frac{1}{\eta})e^{-cN\delta^2}+c_2\eta+c_3\eta+ \frac{c_4}{N^{\frac{1}{2}-\epsilon}\eta^{\frac{1}{2}}}
\end{align*}
where $c_1,c_2,c_3,c_4$ are universal constants depending only on $\delta$. Therefore we can take $\eta=\frac{1}{\sqrt{N}}$ and for $N$ large enough we will have $|\EE[f(W_N)]-m_{sc}(z)|\leq \frac{\xi}{6}$.  If we combine this and (\ref{logsob}), we will have
\begin{align*}
&\PP(W_N\in D\text{ and } |u^*(z-W_N)^{-1}u -m_{sc}(z)|\leq \frac{\xi}{3})\\
=&\PP(W_N\in D\text{ and } |f(W_N) -m_{sc}(z)|\leq \frac{\xi}{3})\\
\geq& \PP(|f(W_N) -m_{sc}(z)|\leq \frac{\xi}{3})-\PP(W_N\notin D)
\geq 1-Ce^{-cN\xi^2},
\end{align*}
where $c$ depends on $\delta$ and the logarithmic Sobolev constant $\gamma$. By the same epsilon-net argument as in Proposition \ref{orthogonalcase}, we can derive (\ref{iidconcen}) from \eqref{singlevector}.
\end{proof}

With the rigidity result of the extreme eigenvalues from Theorem \ref{wigsum}, one can easily derive the empirical distribution of these extreme eigenvalues if the empirical distribution of nonzero eigenvalues of $P_N$ goes to some limit. In fact we have the following corollary,  

\begin{corollary}
Wigner ensemble $W_N$ and perturbation $P_N$ are defined as in Section \ref{sumdef}, we assume that the norm of $P_N$ is bounded by some universal constant $B$, i.e. $\|P_N\|\leq B$. Moreover we assume that the empirical distribution of the nonzero eigenvalues of $P_N$ converges weakly to a compactly supported measure $\nu$,
\begin{align*}
\hat{\nu}_{N}=\frac{1}{M}\sum_{i=1}^{M}\delta_{\theta_i}\rightarrow \nu,\quad N\rightarrow +\infty,
\end{align*} 
with support supp $\nu \subset [a, b]$, and $a>1$. We denote the eigenvalues of $\tilde{W}_N=W_N+P_N$ as $\lambda_1(\tilde{W}_N)\geq \lambda_2(\tilde{W}_N)\geq \cdots \geq \lambda_N(\tilde{W}_N)$. Then almost surely, the empirical distribution of the largest $M$ eigenvalues of $\tilde{W}_N$ converges weakly to the push forward measure $\gamma_\# \nu$, 
\begin{align*}
\frac{1}{M}\sum_{i=1}^{M}\sigma_{\lambda_i(\tilde{W}_N)}\rightarrow \gamma_\#\nu, \quad a.s.\end{align*}
as $N$ goes to $+\infty$, where $\gamma(\theta)=\theta+\frac{1}{\theta}$.
\end{corollary}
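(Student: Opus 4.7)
The plan is to combine the rigidity result of Theorem~\ref{wigsum} (applied uniformly in $i$ via a union bound) with the weak convergence $\hat{\nu}_N \to \nu$, and then upgrade convergence in probability to almost sure convergence by Borel--Cantelli.

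First I would fix $\delta>0$ with $1+2\delta<a$, so that every $\theta$ in the support of $\nu$ satisfies the separation condition \eqref{wigsep}. Since $\hat{\nu}_N \to \nu$ weakly and $\operatorname{supp}\nu\subset[a,b]$, for any $\eta>0$ with $a-\eta>1+2\delta$ we have $\hat{\nu}_N([a-\eta,b+\eta])\to 1$. Let $M'(N)$ be the number of indices $i$ with $\theta_i\in[1+2\delta,B]$; then $M'(N)=M-o(M)$, and all these $\theta_i$'s satisfy the separation condition. In particular, the $M'(N)$ values $\gamma(\theta_i)$ lie in the compact interval $\gamma([1+2\delta,B])\subset(2,\infty)$.

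Next I would choose a sequence $\epsilon_N\to 0$ with $\sqrt{M/N}\ll\epsilon_N\le\delta$ and $c_1 N\epsilon_N^2-c_2 M \ge 3\log N$; this is possible because $M=o(N)$ (for instance, pick $\epsilon_N^2=c_1^{-1}(2c_2 M+4\log N)/N$). Applying Theorem~\ref{wigsum} with this $\epsilon_N$ for each of the $M'(N)$ separated indices and taking a union bound, the event
\begin{equation*}
\mathcal{E}_N=\bigl\{\,|\tilde{\lambda}_i-\gamma(\theta_i)|\le\epsilon_N\text{ for every }i\text{ with }\theta_i\ge 1+2\delta\,\bigr\}
\end{equation*}
has probability at least $1-CM\,e^{-c_1 N\epsilon_N^2+c_2 M}\ge 1-CN^{-2}$, so $\sum_N\PP(\mathcal{E}_N^{\complement})<\infty$ and by Borel--Cantelli, $\mathcal{E}_N$ holds eventually almost surely. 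On $\mathcal{E}_N$, these $M'(N)$ extreme eigenvalues are exactly the top eigenvalues of $\tilde{W}_N$ (they lie above $\gamma(1+2\delta)-\epsilon_N>2+\delta/2$, while Weyl's inequality together with $\|W_N\|\le 2+\delta/2$ with exponential probability confines the remaining $N-M'(N)$ eigenvalues of $\tilde W_N$ below $2+\delta/2+\|P_N\|_{\text{on the complementary subspace}}$, and a standard tightness argument shows at most $o(M)$ of them can exceed $\gamma(1+2\delta)-\epsilon_N$).

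Finally, for any $\phi\in C_b(\RR)$, write
\begin{equation*}
\frac{1}{M}\sum_{i=1}^{M}\phi(\lambda_i(\tilde{W}_N))=\frac{1}{M}\sum_{\theta_i\ge 1+2\delta}\phi(\tilde\lambda_i)+\frac{1}{M}\sum_{\text{rest}}\phi(\lambda_i(\tilde W_N)).
\end{equation*}
The second sum has at most $o(M)$ terms each bounded by $\|\phi\|_\infty$, hence is $o(1)$. On $\mathcal{E}_N$, using the uniform continuity of $\phi$ on the compact neighborhood $[\,\gamma(1+2\delta)-1,\gamma(a)+1\,]$ of $\gamma([1+2\delta,B])$, the first sum differs from $\frac{1}{M}\sum_{\theta_i\ge 1+2\delta}\phi(\gamma(\theta_i))$ by $\omega_\phi(\epsilon_N)=o(1)$. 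That latter quantity equals $\int \phi\circ\gamma\,d\hat{\nu}_N+o(1)$, and converges to $\int \phi\circ\gamma\,d\nu=\int\phi\,d(\gamma_\#\nu)$ since $\phi\circ\gamma$ is bounded continuous on $[a,b]$ and $\hat{\nu}_N\to\nu$ weakly.

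The main technical point is the coupling between the three small parameters: $\epsilon_N$ must go to zero while still making both the union bound over $M$ indices summable in $N$ (to invoke Borel--Cantelli) and the deterministic approximation $\omega_\phi(\epsilon_N)\to 0$ effective. The condition $M=o(N)$ together with the explicit large deviation rate $N\epsilon^2$ in Theorem~\ref{wigsum} is exactly what makes such a choice possible.
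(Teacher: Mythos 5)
Your proof is correct and follows essentially the same approach as the paper's: fix a cutoff so that the separation condition \eqref{wigsep} holds on the support of $\nu$ (the paper takes $\delta=\tfrac{1}{3}(a-1)$, giving a cutoff $a-\delta=1+2\delta$ which coincides with yours), apply Theorem~\ref{wigsum} together with a union bound and Borel--Cantelli to the separated $\theta_i$'s, and bound the $o(M)$ remaining indices trivially by $\|\phi\|_\infty$ using weak convergence of $\hat{\nu}_N$. Two small remarks: the parenthetical invoking Weyl's inequality and a ``standard tightness argument'' is unnecessary, since the indexing $\tilde\lambda_i=\lambda_i(\tilde W_N)$ built into Theorem~\ref{wigsum} already supplies the bookkeeping between the separated $\theta_i$'s and the top eigenvalues; and your explicit choice $\epsilon_N^2=c_1^{-1}(2c_2 M+4\log N)/N$ gives $\epsilon_N\asymp\sqrt{M/N}$ rather than $\epsilon_N\gg\sqrt{M/N}$ whenever $M\gtrsim\log N$, so it does not formally match the hypothesis of Theorem~\ref{wigsum} --- though the probability estimate it yields is still summable, so the argument survives; the paper's choice $\epsilon_N\asymp(M/N)^{1/4}$ sidesteps this.
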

\begin{proof}
We fix a constant $\delta=\frac{1}{3}(a-1)$, and define the event 
\begin{align*}
A_N=\bigcap_{i: \theta_i\geq a-\delta}\left\{\left|\lambda_i(\tilde{W}_N)-\theta_i-\frac{1}{\theta_i}\right|\leq \epsilon_N\right\},
\end{align*}
where $\epsilon_N$ will be chosen later. Since for these $\theta_i\geq a-\delta$, they satisfy the separation condition (\ref{wigsep}): $\theta_i\geq a-\delta\geq 1+2\delta$. From theorem \ref{wigsum}, there exist constants $c_1$, $c_2$, $c_3$ and $C$ such that $\PP\left(A_N^\complement\right)\leq  CM e^{-c_1N\epsilon_N^2+c_2M}$.
Therefore if we take $\epsilon_N=\min\{\delta, \left(\frac{2c_2}{c_1}\right)^{\frac{1}{2}} \left(\frac{M}{N}\right)^{\frac{1}{4}}\}$, since $M=o(N)$, we have $\epsilon_N\rightarrow 0$. Moreover by Borel-Cantelli lemma, almost surely $A_N$'s hold. Due to the weak convergence of $\hat{\nu}_N$, we need to show that, for any bounded Lipschitz test function $f$, that
\begin{align*}
\lim_{N\rightarrow \infty} \left(\frac{1}{M}\sum_{i=1}^{M}\big(f(\lambda_i(\tilde{W}_N))-f(\theta_i+\frac{1}{\theta_i})\big)\right)=0.
\end{align*}
This term can be decomposed into two parts. The first part corresponds to $\theta_i$'s outside of the support of $\nu$. Due to weak convergence of $\hat{\nu}_N$, the portion of such $\theta_i$'s goes to zero,
\begin{align*}
\left|\frac{1}{M}\sum_{i: \theta_i\leq a-\delta}\big(f(\lambda_i(\tilde{W}_N))-f(\theta_i+\frac{1}{\theta_i})\big)\right|\leq \frac{2\|f\|_{\infty}}{M}\#\{i: \theta_i\leq a-\delta\}\rightarrow 0.
\end{align*}
The second part corresponds to $\theta_i$'s, such that $\theta_i>a-\delta$. Since almost surely $A_N$'s hold, which implies that simultaneously $\left|\lambda_i(\tilde{W}_N)-\theta_i+\frac{1}{\theta_i}\right|\leq \epsilon_N$. We can control the second term by the Lipschitz norm of $f$, 
\begin{align*}
\left|\frac{1}{M}\sum_{i: \theta_i> a-\delta}\big(f(\lambda_i(\tilde{W}_N))-f(\theta_i+\frac{1}{\theta_i})\big)\right|
\leq\epsilon_N\|f\|_{\mc{L}}\rightarrow 0,\quad a.s.
\end{align*}
Therefore almost surely we have the convergence of the empirical distribution of the largest $M$ eigenvalues of $\tilde{W}_N$,
\begin{align*}
\lim_{N\rightarrow \infty}\frac{1}{M}\sum_{i=1}^{M}f(\lambda_i(\tilde{W}_N))
=\lim_{N\rightarrow \infty}\sum_{i=1}^{M} f(\theta_i+\frac{1}{\theta_i})
=\int f(\theta) d\gamma_\# \nu.
\end{align*}
\end{proof}
\begin{remark}
If we assume that the extreme nonzero eigenvalues of $P_N$ converge to the edge of the limit distribution $\nu$, then almost surely we have 
\begin{align*}
\lambda_1(\tilde{W}_N)\rightarrow b+\frac{1}{b},\quad \lambda_M(\tilde{W}_N)\rightarrow a+\frac{1}{a}.
\end{align*}
\end{remark}

\section{Multiplicative Perturbation Case}
\subsection{Definition and Notation}{\label{multdef}}
In this section we study the eigenvalues and eigenvectors of a multiplicative perturbed real symmetric (or Hermitian) random matrix $W_N$ by a deterministic diagonal matrix $P_N$ with small rank, 
\begin{align*}
\tilde{W}_N=(I+P_N)^{\frac{1}{2}}W_N(I_N+P_N)^{\frac{1}{2}}. 
\end{align*}
For the random matrix $W_N$, we consider two cases:
\begin{enumerate}
\item $W_N$ is orthogonally (or unitarily) independent with $P_N$. More precisely, $W_N=U^*H_NU$, where $U$ follows the Haar measure on $N\times N$ orthogonal (or unitary) group, and $H_N$ is an $N\times N$ nonzero positive semi-definite symmetric (or Hermitian) matrix. We denote the ordered eigenvalues of $H_N$ by $\lambda_{1}(H_N)\geq \lambda_{2}(H_N)\geq \cdots\geq \lambda_{N}(H_N)\geq 0$. Let $\mu_{N}$ be the empirical eigenvalue distribution of $H_N$, i.e.
$\mu_{N}=\frac{1}{N}\sum_{i=1}^{N}\delta_{\lambda_{i}(H_N)}$. Let $T_N(z)$ be the T-transform of the empirical eigenvalue distribution of $H_N$,
\begin{align*}
T_{N}(z)=\int_{\RR}\frac{x}{z-x}d\mu_N(x)=\frac{1}{N}\sum_{i=1}^{N}\frac{\lambda_i(H_N)}{z-\lambda_i(H_N)},
\end{align*}
for $z\in (-\infty, \lambda_N(H_N))\cup (\lambda_1(H_N),\infty)$. Similar to the Stieltjes transform, $T_N^{-1}$ is well defined on $\RR-\{0\}$. It maps $(0, \infty)$ to $(\lambda_{1}(H_N), \infty)$, and $(-\infty, 0)$ to $(-\infty, \lambda_{N}(H_N))$.
\item $W_N=X_NX_N^*$ is a Wishart ensemble, where $X_N=\frac{1}{\sqrt{p}}[x_{ij}]_{1\leq i\leq N\atop 1\leq j \leq p}$ is an $N\times p$ random matrix. As $N\rightarrow +\infty$, the ratio $\frac{N}{p}$ goes to some positive constant less than one, i.e. $\frac{N}{p}\rightarrow \phi\in (0,1)$.  $x_{ij}$'s are i.i.d. random variables with mean zero and variance one:
\begin{align*}
\EE[x_{ij}]=0,\quad \EE[x_{ij}^2]=1.
\end{align*} 
Moreover, for technical reason, we assume that $x_{ij}$'s satisfy logarithmic Sobolev inequality with constant $\gamma$.
\end{enumerate}

The perturbation $P_N$ is the same as defined in Section \ref{sumdef}. Moreover we require that all the eigenvalues of $P_N$ are larger than $-1$, so that $I_N+P_N$ is positive definite.

\subsection{Orthogonally (or Unitarily) Independent Case}
In this section we study the eigenvalues and eigenvectors of 
\begin{align*}
\tilde{W}_N=(I_N+P_N)^{\frac{1}{2}}U^*H_NU(I_N+P_N)^{\frac{1}{2}}.
\end{align*}
Without loss of generality we can assume that $H_N$ is a diagonal matrix. Since we concentrate only on the  extreme eigenvalues of the perturbed matrix $\tilde{W}_N$, in the following we fix a small universal constant $\delta>0$, and study the eigenvalues of $\tilde{W}_N$  on the spectral domain $(\lambda_N(H_N)-\delta, \lambda_1(H_N)+\delta)^\complement$.
\begin{theorem}{\label{eigfreemult}}
$H_N$ and $P_N$ are defined as in Section \ref{multdef}, we assume that their norms are bounded by some universal constant $B$, i.e. $\|H_N\|\leq B$, $\|P_N\|\leq B$. We denote the eigenvalues of $\tilde{W}_N=(I_N+P_N)^{\frac{1}{2}}U^*H_NU(I_N+P_N)^{\frac{1}{2}}$ by $\lambda_{1}(\tilde{W}_N)\geq \lambda_{2}(\tilde{W}_N)\geq \cdots\geq \lambda_{N}(\tilde{W}_N)$. For any $0<\epsilon\leq \delta$ (it may depend on $M$ and $N$), and any $1\leq i\leq M$, if the eigenvalue $\theta_i$ of the perturbation $P_N$ satisfies the following separation condition:
\begin{align}
\label{freemultsep}
\begin{split}
T_N^{-1}(\frac{1}{\theta_i})\geq \lambda_1(H_N)+2\delta,\quad \text{if } \theta_i>0,\\
T_N^{-1}(\frac{1}{\theta_{i}})\leq \lambda_N(H_N)-2\delta,\quad \text{if } \theta_i<0,
\end{split}
\end{align}
then it creates an extreme eigenvalue $\tilde{\lambda}_i$ of $\tilde{W}_N$, where $\tilde{\lambda}_i=\lambda_i(\tilde{W}_N)$ for $\theta_i>0$, and $\tilde{\lambda}_i=\lambda_{N-M+i}(\tilde{W}_N)$ for $\theta_i<0$, with high probability, 
\begin{align}\label{loc1}
\PP\left(T_N^{-1}(\frac{1}{\theta_i})-\epsilon \leq \tilde{\lambda}_i\leq T_N^{-1}(\frac{1}{\theta_i})+\epsilon\right)\geq 1-C e^{-c_1N\epsilon^2+c_2M}
\end{align}
where $c_1$, $c_2$ and $C$ depend only on $\delta$ and $B$. 
\end{theorem}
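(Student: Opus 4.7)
The plan is to mirror the proof of Theorem \ref{eigfreesum}, with the Stieltjes transform $m_N$ replaced by the $T$-transform $T_N$ and the resolvent $(z - H_N)^{-1}$ replaced by the modified resolvent $(z - H_N)^{-1} H_N$. Writing $A = U^* H_N U$ and $B = (I + P_N)^{1/2}$, Sylvester's determinant identity gives $\det(zI - BAB) = \det(zI - A B^2) = \det(zI - A - AP_N)$, so for $z$ on the spectral domain $(\lambda_N(H_N)-\delta, \lambda_1(H_N)+\delta)^\complement$ the matrix $zI - A$ is invertible, and writing $P_N = E_M \tilde{P}_M E_M^*$ with $E_M$ the $N\times M$ matrix of the first $M$ standard basis vectors, a second application of Sylvester's identity reduces the eigenvalue equation to the master equation
\begin{align*}
\det\bigl(\tilde{P}_M^{-1} - \tilde{U}^* (zI - H_N)^{-1} H_N \tilde{U}\bigr) = 0,
\end{align*}
where $\tilde{U} = U E_M$ consists of the first $M$ columns of $U$.

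Setting $D(z) := \tilde{P}_M^{-1} - \tilde{U}^* (zI - H_N)^{-1} H_N \tilde{U}$, I would then run the monotonicity argument. For $z > z' \geq \lambda_1(H_N) + \delta$ the difference $(z' I - H_N)^{-1} H_N - (zI - H_N)^{-1} H_N$ is diagonal with nonnegative entries $\lambda_i(H_N)(z - z')/\bigl[(z' - \lambda_i(H_N))(z - \lambda_i(H_N))\bigr]$, crucially using $H_N \geq 0$. Hence $D(z) - D(z')$ is positive semi-definite, so by Lemma \ref{comparison} (and the remark following it) the ordered eigenvalues $\tau_1(z) \geq \cdots \geq \tau_M(z)$ of $D(z)$ are non-decreasing in $z$. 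Introducing the counting function $\mf{n}(z) = \#\{i : \tau_i(z) \geq 0\}$ exactly as in the proof of Theorem \ref{eigfreesum}, it suffices to verify
\begin{align*}
\mf{n}(L) \leq M_1 - i \quad \text{and} \quad \mf{n}(R) \geq M_1 - i + 1
\end{align*}
with $L = T_N^{-1}(1/\theta_i) - \epsilon$ and $R = T_N^{-1}(1/\theta_i) + \epsilon$; the argument on $(-\infty, \lambda_N(H_N) - \delta]$ relevant for $\theta_i < 0$ is symmetric.

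The remaining input is a $T$-transform analog of Proposition \ref{orthogonalcase} asserting that $\tilde{U}^* (zI - H_N)^{-1} H_N \tilde{U}$ concentrates around $T_N(z) I_M$ with probability at least $1 - C e^{-c_1 N\xi^2 + c_2 M}$. Since $(zI - H_N)^{-1} H_N$ is a diagonal matrix whose operator norm is bounded in terms of $B$ and $\delta$ on the spectral domain, the Lipschitz concentration on the Haar measure together with the $\epsilon$-net argument behind Proposition \ref{orthogonalcase} applies verbatim, with the deterministic limit changing from $m_N(z)$ to the normalized trace $\frac{1}{N}\Tr\bigl[(zI - H_N)^{-1} H_N\bigr] = T_N(z)$. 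Combining this with monotonicity estimates on $T_N$ in the spirit of Proposition \ref{inequalitySandT} to quantify $1/\theta_i - T_N(R) > 0 > 1/\theta_i - T_N(L)$, one sandwiches $D(L)$ and $D(R)$ between shifted copies of $\tilde{P}_M^{-1}$ having the required numbers of positive eigenvalues, forcing $\mf{n}$ to take the desired values and yielding \eqref{loc1}. The main obstacle I anticipate is the case $\theta_i < 0$ when $\lambda_N(H_N)$ is small, because then $T_N$ is bounded on $(-\infty, \lambda_N(H_N))$ and $T_N^{-1}(1/\theta_i)$ may fail to exist; the separation condition \eqref{freemultsep} implicitly restricts to the range where $T_N^{-1}$ is defined, but care is needed to ensure the monotonicity and concentration estimates remain quantitatively uniform as one approaches this boundary.
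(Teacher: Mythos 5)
Your proof follows the paper's argument essentially verbatim: the same similarity/Sylvester reduction to the master equation $\det(\tilde{P}_M^{-1}-\tilde U^*(z-H_N)^{-1}H_N\tilde U)=0$, the same monotonicity of $D(z)$ from $H_N\geq 0$, concentration via Proposition \ref{orthogonalcase} applied directly with $A=(z-H_N)^{-1}H_N$ (so $\Tr A/N=T_N(z)$, no new ``analog'' is needed), and the quantitative sandwich from Proposition \ref{inequalitySandT}. The one subtlety the paper makes explicit that you pass over is that $D(z)-D(z')$ is only positive semi-definite (the $\tau_i$ are merely non-decreasing), so one should note that no $\tau_i$ can remain identically zero on an interval --- this is automatic since $\det D(z)$ has finitely many roots --- before invoking the counting-function argument.
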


\begin{proof}
The proof is similar to the additive perturbation case. For any $z\in (\lambda_N(H_N)-\delta, \lambda_1(H_N)+\delta)^\complement$, $z-H_N$ is invertible, we have
\begin{align*}
\det(z-(I_N+P_N)^{\frac{1}{2}}U^*H_NU(I_N+P_N)^{\frac{1}{2}})
=&\det(z-H_N)\det (I_N-(z-H_N)^{-1}H_N U P_N U^*)\\
=&\det(z-H_N)\det (I_M-\tilde{U}^{*}(z-H_N)^{-1} H_N\tilde{U} \tilde{P}_M),
\end{align*}
where $\tilde{U}$ is the $N\times M$ matrix, consisting of the first $M$ columns of $U$. The extreme eigenvalues of $(I_N+P_N)^{\frac{1}{2}}U^*H_NU(I_N+P_N)^{\frac{1}{2}}$ are solutions of the equation,
\begin{align}
\label{outlierequation}\det (\tilde{P}_M^{-1}-\tilde{U}^{*}(z-H_N)^{-1} H_N\tilde{U})=0.
\end{align}
Denote $D(z)=\tilde{P}_M^{-1}-\tilde{U}^{*}(z-H_N)^{-1} H_N\tilde{U}$. Since $H_N$ is positive semi-definite, one can check that $D(z)$ is non-decreasing on the intervals $(-\infty, \lambda_N(H_N)-\delta]$ and $[\lambda_1(H_N)+\delta, +\infty)$. This implies that the eigenvalues of $D(z)$ are non-decreasing. Since (\ref{outlierequation}) has finitely many solutions, eigenvalues of $D(z)$ can not stay at zero. (In fact one can show that eigenvalues of $D(z)$, as a function of $z$, are either nonzero constant or strictly increasing.) Therefore we can go through the same proof as in Theorem \ref{eigfreesum}. The only difference is: $\tilde{U}^{*}(z-H_N)^{-1} H_N\tilde{U}\approx T_N(z)I_M$.
This is the reason the T-transform appears in the statement instead of the Stieltjes transform.
\end{proof}

\begin{theorem}{\label{evfreemulti}}
$H_N$ and $P_N$ are defined as in Section \ref{multdef}, we assume that their norms are bounded by some universal constant $B$, i.e. $\|H_N\|\leq B$, $\|P_N\|\leq B$. We denote the eigenvalues of $\tilde{W}_N=(I_N+P_N)^{\frac{1}{2}}U^*H_NU(I_N+P_N)^{\frac{1}{2}}$ by $\lambda_{1}(\tilde{W}_N)\geq \lambda_{2}(\tilde{W}_N)\geq \cdots\geq \lambda_{N}(\tilde{W}_N)$. For any $1\leq i\leq M$, such that the eigenvalue $\theta_i$ of the perturbation $P_N$ satisfies the separation condition (\ref{freemultsep}), then with high probability it will create an extreme eigenvalue $\tilde{\lambda}_i$, where $\tilde{\lambda}_i=\lambda_i(\tilde{W}_N)$ for $\theta_i>0$, and $\tilde{\lambda}_i=\lambda_{N-M+i}(\tilde{W}_N)$ for $\theta_i<0$. We denote the corresponding eigenvector by $v_i$ (if $\tilde{\lambda}_i$ is not a single eigenvalue, $v_i$ can be any of its eigenvectors). We denote the projection of $v_i$ on the eigenspace of the nonzero eigenvalues of $P_N$ by $\tilde{v}_i$, which is the projection of $v_i$ on the first $M$ coordinates. For any $0< \epsilon\leq\delta$, it may depend on $M$ and $N$, with probability at least $1-C e^{-c_1N\epsilon^2+c_2M}$, where $c_1$, $c_2$ and $C$ depend only on $\delta$ and $B$, we have
\begin{enumerate}
\item The norm of $\tilde{v}_i$ satisfies
\begin{align}
\label{proj2}\left|\left\|\tilde{v}_i\right\|^2+\frac{\theta_i+1}{\theta_i^2T_N^{-1}(\frac{1}{\theta_i})T_N'(T_N^{-1}(\frac{1}{\theta_i}))}\right|\leq \epsilon C_{B,\delta}.
\end{align}
\item The projection $\tilde{v}_i$ is an approximate eigenvector of $\tilde{P}_M$ with eigenvalue $\theta_i$,
\begin{align}
\label{loc2}\left\|\tilde{P}_M \tilde{v}_i-\theta_i\tilde{v}_i \right\|\leq \epsilon C_{B,\delta},
\end{align}
\end{enumerate}
where $C_{B,\delta}$ is a constant depending on $\delta$ and $B$.
\end{theorem}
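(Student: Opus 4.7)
The plan is to adapt the proof of Theorem \ref{evfreesum} to the multiplicative setting. Starting from $(I_N+P_N)^{1/2}U^*H_NU(I_N+P_N)^{1/2}v_i = \tilde{\lambda}_i v_i$, I first rewrite it as $U^*H_NU(I_N+P_N)^{1/2}v_i = \tilde{\lambda}_i(I_N+P_N)^{-1/2}v_i$, and subtract $\tilde{\lambda}_i(I_N+P_N)^{1/2}v_i$ from both sides so that the right-hand side collapses to $-\tilde{\lambda}_i(I_N+P_N)^{-1/2}P_Nv_i$. Inverting $\tilde{\lambda}_i I-U^*H_NU$ on the left (valid on the event where $\tilde{\lambda}_i$ lies outside the spectrum of $H_N$) and using that $(I_N+P_N)^{-1/2}P_Nv_i$ is supported on the first $M$ coordinates, this yields the self-consistent identity
\begin{equation}\label{plan-self}
(I_N+P_N)^{1/2}v_i = \tilde{\lambda}_i\, U^*(\tilde{\lambda}_i-H_N)^{-1}\tilde{U}\,(I_M+\tilde{P}_M)^{-1/2}\tilde{P}_M\tilde{v}_i.
\end{equation}
Projecting \eqref{plan-self} onto the first $M$ coordinates produces the analogue of \eqref{ch1},
\begin{equation}\label{plan-ch1}
(I_M+\tilde{P}_M)^{1/2}\tilde{v}_i = \tilde{\lambda}_i\,A\,(I_M+\tilde{P}_M)^{-1/2}\tilde{P}_M\tilde{v}_i,\qquad A:=\tilde{U}^*(\tilde{\lambda}_i-H_N)^{-1}\tilde{U},
\end{equation}
and taking the squared norm of both sides of \eqref{plan-self} together with $\|(I_N+P_N)^{1/2}v_i\|^2 = 1+\tilde{v}_i^*\tilde{P}_M\tilde{v}_i$ gives the analogue of \eqref{ch2},
\begin{equation}\label{plan-ch2}
1+\tilde{v}_i^*\tilde{P}_M\tilde{v}_i = \tilde{\lambda}_i^2\,\tilde{v}_i^*\tilde{P}_M(I_M+\tilde{P}_M)^{-1/2}B(I_M+\tilde{P}_M)^{-1/2}\tilde{P}_M\tilde{v}_i,\qquad B:=\tilde{U}^*(\tilde{\lambda}_i-H_N)^{-2}\tilde{U}.
\end{equation}

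Next I invoke Theorem \ref{eigfreemult} to localize $\tilde{\lambda}_i\in[L,R]$ where $L=T_N^{-1}(1/\theta_i)-\epsilon/K$ and $R=T_N^{-1}(1/\theta_i)+\epsilon/K$ for a suitable constant $K=K(B,\delta)$, and apply Proposition \ref{orthogonalcase} at $z=L,R$ to the operators $(z-H_N)^{-1}$ and $(z-H_N)^{-2}$. Using the monotonicity of $z\mapsto\tilde{U}^*(z-H_N)^{-k}\tilde{U}$ for $k=1,2$ on $(\lambda_1(H_N)+\delta,\infty)$, exactly as in the proof of Theorem \ref{evfreesum}, I obtain on an event $\mathcal{A}$ of probability at least $1-Ce^{-c_1N\epsilon^2+c_2M}$ the sandwich bounds
\begin{equation}\label{plan-concAB}
\|A-m_N(\tilde{\lambda}_i)I_M\|\leq\epsilon,\qquad \|B+m_N'(\tilde{\lambda}_i)I_M\|\leq\epsilon.
\end{equation}

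On $\mathcal{A}$, substituting the first estimate of \eqref{plan-concAB} into \eqref{plan-ch1} and multiplying on the left by $(I_M+\tilde{P}_M)^{1/2}$ produces $(I_M+\tilde{P}_M)\tilde{v}_i = \tilde{\lambda}_im_N(\tilde{\lambda}_i)\tilde{P}_M\tilde{v}_i + O_{B,\delta}(\epsilon)$; since $\tilde{\lambda}_im_N(\tilde{\lambda}_i) = 1+T_N(\tilde{\lambda}_i) = (1+\theta_i)/\theta_i+O_{B,\delta}(\epsilon)$, a direct rearrangement gives $\theta_i\tilde{v}_i = \tilde{P}_M\tilde{v}_i+O_{B,\delta}(\epsilon)$, which is \eqref{loc2}. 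Inserting both \eqref{loc2} and the second estimate of \eqref{plan-concAB} into \eqref{plan-ch2}, the left-hand side reduces to $1+\theta_i\|\tilde{v}_i\|^2+O_{B,\delta}(\epsilon)$ and the right-hand side to $-\tilde{\lambda}_i^2m_N'(\tilde{\lambda}_i)\theta_i^2(1+\theta_i)^{-1}\|\tilde{v}_i\|^2+O_{B,\delta}(\epsilon)$; solving for $\|\tilde{v}_i\|^2$ gives
\[
\|\tilde{v}_i\|^2 = \frac{1+\theta_i}{-\theta_i(1+\theta_i)-\theta_i^2\tilde{\lambda}_i^2m_N'(\tilde{\lambda}_i)}+O_{B,\delta}(\epsilon),
\]
and using $T_N'(z) = m_N(z)+zm_N'(z)$ together with $\tilde{\lambda}_im_N(\tilde{\lambda}_i)=(1+\theta_i)/\theta_i+O(\epsilon)$ one verifies that the denominator equals $\theta_i^2\tilde{\lambda}_iT_N'(\tilde{\lambda}_i)+O_{B,\delta}(\epsilon)$ up to sign, so the formula $\|\tilde{v}_i\|^2 = -(1+\theta_i)/(\theta_i^2\tilde{\lambda}_iT_N'(\tilde{\lambda}_i))+O_{B,\delta}(\epsilon)$ follows; replacing $\tilde{\lambda}_i$ by $T_N^{-1}(1/\theta_i)$ (which costs at most $O_{B,\delta}(\epsilon)$) then yields \eqref{proj2}. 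The main obstacle is the algebraic bookkeeping that converts the $m_N$-form into the $T_N$-form appearing in the statement and keeps track of all error terms through the multiple substitutions; a secondary subtlety is the $(I_M+\tilde{P}_M)^{\pm 1/2}$ factors, which could in principle blow up if some $\theta_j$ were near $-1$, but this is harmless because \eqref{loc2} forces $\tilde{v}_i$ to concentrate on the $\theta_i$-eigenspace of $\tilde{P}_M$ and the separation condition \eqref{freemultsep} keeps $\theta_i$ bounded away from $-1$.
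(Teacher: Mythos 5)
Your proposal reaches the right formulas and is built from the same three ingredients as the paper's proof (master equation for the eigenvector projection, concentration for $\tilde{U}^*A\tilde{U}$ with monotonicity to localize at the random point $\tilde{\lambda}_i$, then algebra), but the bookkeeping is genuinely different. The paper first changes variables to $w_i=(I_N+P_N)^{-1/2}v_i/\|(I_N+P_N)^{-1/2}v_i\|$, which is a unit eigenvector of $U^*H_NU(I_N+P_N)$; the resulting master equations for $\tilde{w}_i$,
\begin{align*}
\bigl(I_M-\tilde{U}^*(\tilde{\lambda}_i-H_N)^{-1}H_N\tilde{U}\,\tilde{P}_M\bigr)\tilde{w}_i=0,\qquad \tilde{w}_i^*\tilde{P}_M\bigl(\tilde{U}^*H_N(\tilde{\lambda}_i-H_N)^{-2}H_N\tilde{U}\bigr)\tilde{P}_M\tilde{w}_i=1,
\end{align*}
involve the operators $\tilde{U}^*(z-H_N)^{-1}H_N\tilde{U}\approx T_N(z)I_M$ and $\tilde{U}^*H_N(z-H_N)^{-2}H_N\tilde{U}\approx\bigl(-T_N(z)-zT_N'(z)\bigr)I_M$, so the T-transform enters immediately and there are no $(I_M+\tilde{P}_M)^{\pm1/2}$ factors anywhere in the estimates; only at the very end does one multiply by the bounded matrix $(I_M+\tilde{P}_M)^{1/2}$ and a scalar normalization. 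You instead work with $v_i$ directly, your master equations use the Stieltjes resolvents $\tilde{U}^*(z-H_N)^{-k}\tilde{U}$ sandwiched by $(I_M+\tilde{P}_M)^{\pm1/2}$, and you recover the T-transform form through $T_N(z)=zm_N(z)-1$ and $T_N'(z)=m_N(z)+zm_N'(z)$. I verified that this algebraic conversion indeed reproduces \eqref{loc2} and \eqref{proj2}, so the calculation is correct; the two routes are equivalent.

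Where your argument is thin is in dismissing the $(I_M+\tilde{P}_M)^{-1/2}$ factors. You argue this is harmless because the separation condition keeps $\theta_i$ away from $-1$, but that is beside the point: the danger comes from \emph{other} $\theta_j$ near $-1$. In \eqref{plan-ch2} the concentration error $E=B+m_N'(\tilde{\lambda}_i)I_M$ is sandwiched as $(I_M+\tilde{P}_M)^{-1/2}E(I_M+\tilde{P}_M)^{-1/2}$, and the resulting error term is controlled by $\|E\|\,\|(I_M+\tilde{P}_M)^{-1/2}\tilde{P}_M\tilde{v}_i\|^2$, whose size a priori depends on $\max_j(1+\theta_j)^{-1}$; \eqref{loc2} only bounds the components of $\tilde{v}_i$ away from the $\theta_i$-eigenspace by $O(\epsilon)$, which after amplification by $(1+\theta_j)^{-1/2}$ need not stay $O(\epsilon)$ uniformly in the $\theta_j$'s. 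Moreover you are using \eqref{loc2} as input in \eqref{plan-ch2} while the $(I_M+\tilde{P}_M)^{-1/2}$ factor already appears in the derivation of \eqref{loc2} from \eqref{plan-ch1}, so there is a mild circularity. The clean repair is to do the whole estimate in the variable $\tilde{y}_i=(I_M+\tilde{P}_M)^{-1/2}\tilde{v}_i$ (which is proportional to the paper's $\tilde{w}_i$), where \eqref{plan-ch1} becomes $\tilde{y}_i=(\tilde{\lambda}_iA-I_M)\tilde{P}_M\tilde{y}_i$ with $\tilde{\lambda}_iA-I_M\approx T_N(\tilde{\lambda}_i)I_M$ and no singular factors, and only convert back to $\tilde{v}_i$ at the end via the \emph{bounded} multiplication $(I_M+\tilde{P}_M)^{1/2}$; the scalar $\|\tilde{y}_i\|=\|(I_M+\tilde{P}_M)^{-1/2}\tilde{v}_i\|\leq\|(I_N+P_N)^{-1/2}v_i\|$ is controlled from the eigenvalue equation by $\|(I_N+P_N)^{-1/2}v_i\|=|\tilde{\lambda}_i|^{-1}\|U^*H_NU(I_N+P_N)^{1/2}v_i\|\leq B\sqrt{1+B}/\delta$ when $\tilde{\lambda}_i$ lies in the considered domain $(\lambda_1(H_N)+\delta,\infty)$. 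This is precisely what the paper's change of variables accomplishes implicitly, and it is the one substantive thing your write-up should add.
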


\begin{proof}
Since $I_N+P_N$ is positive definite, we can define
\begin{align}
\label{change}w_i=\frac{(I_N+P_N)^{-\frac{1}{2}}v_i}{\|(I_N+P_N)^{-\frac{1}{2}}v_i\|},
\end{align}
which is a normalized eigenvector of $U^*H_NU(I_N+P_N)$. We denote the projection of $w_i$ on the eigenspace of the nonzero eigenvalues of $P_N$ by $\tilde{w}_i$. We have the following two equations for $\tilde{w}_i$,
\begin{align*}
&\left(I_M- \tilde{U}^*(\tilde{\lambda}_i-H_N)^{-1}H_N\tilde{U}\tilde{P}_M\right)\tilde{w}_i=0,\\
&\tilde{w}_i^*\tilde{P}_M \left(\tilde{U}^*H_N(\tilde{\lambda}_i-H_N)^{-2}H_N\tilde{U}\right)\tilde{P}_M\tilde{w}_i=1.
\end{align*}
Approximately, by Proposition \ref{orthogonalcase}, we have $\tilde{U}^*(\tilde{\lambda}_i-H_N)^{-1}H_N\tilde{U}\approx \frac{1}{\theta_i}I_M$ and $\tilde{U}^*H_N(\tilde{\lambda}_i-H_N)^{-2}H_N\tilde{U}\approx \left(-\frac{1}{\theta_i}-T_N^{-1}(\frac{1}{\theta_i})T_N'(T_N^{-1}(\frac{1}{\theta_i}))\right)I_M$. By the same argument as in Theorem \ref{evfreesum}, one can show that $\tilde{w}_i$ is an approximate eigenvector of $\tilde{P}_M$ with eigenvalue $\theta_i$, and its norm is given by \begin{align}
\label{projw2}\left|\left\|\tilde{w}_i\right\|^2+\frac{1}{\theta_i+\theta_i^2T_N^{-1}(\frac{1}{\theta_i})T_N'(T_N^{-1}(\frac{1}{\theta_i}))}\right|\leq \epsilon C_{B,\delta}.
\end{align}
From (\ref{change}), we get $\tilde{v}_i=\|(I_N+P_N)^{-\frac{1}{2}}v_i\|(I_M+\tilde{P}_M)^{\frac{1}{2}}\tilde{w}_i$. So $\tilde{v}_i$ is also an approximate eigenvector of $\tilde{P}_M$ with eigenvalue $\theta_i$. And its norm (\ref{proj2}) can be computed from (\ref{projw2}).
\end{proof}

\subsection{Wishart Case}
In this section we study the eigenvalues of the perturbed Wishart matrices,
\begin{align*}
\tilde{W}_N=(I_N+P_N)^{\frac{1}{2}}W_N(I_N+P_N)^{\frac{1}{2}}.
\end{align*}
Without lose of generality, we assume that $P_N=V^* \tilde{P}_M V$, where $V$ is an $N\times M$ matrix, consisting of the eigenvectors corresponding to the nonzero eigenvalues of $P_N$. It is well known that the empirical eigenvalue distribution of $W_N$ converges to the Marchenko-Pastur law with density
\begin{align*}
\rho_{mp}(x) =\frac{\sqrt{(\gamma_+-x)(x-\gamma_-)}}{2\pi \phi x}, \quad \text{for } x\in [\gamma_-, \gamma_+],
\end{align*}
where $\gamma_-=(1-\sqrt{\phi})^2$ and $\gamma_+=(1+\sqrt{\phi})^2$. We denote its T-transform as
\begin{align*}
T_{mp}(z)=\frac{z-\phi-1 -\text{sgn}(z-\gamma_+)\sqrt{(z-\gamma_-)(z-\gamma_+)}}{2\phi},\quad \text{for } z\in (-\infty, \gamma_-)\cup (\gamma_+, +\infty).
\end{align*}
For any fixed small universal constant $\delta>0$, it is known that with high probability the eigenvalues of the Wishart matrix $W_N$ are in $\left(\gamma_- -\frac{\delta}{2}, \gamma_++\frac{\delta}{2}\right)$. We study the extreme eigenvalues of $\tilde{W}_N$ on the spectral domain $\left(\gamma_--\delta, \gamma_++\delta\right)^\complement$.
\begin{theorem}{\label{wishmul}}
Wishart matrix $W_N=X_NX_N^*$ and perturbation $P_N$ are defined as in Section \ref{multdef}, we assume that the norm of $P_N$ is bounded by some universal constant $B$, i.e. $\|P_N\|\leq B$. We denote the eigenvalues of $\tilde{W}_N=(I_N+P_N)^{\frac{1}{2}}W_N(I_N+P_N)^{\frac{1}{2}}$ by $\lambda_{1}(\tilde{W}_N)\geq \lambda_{2}(\tilde{W}_N)\geq \cdots\geq \lambda_{N}(\tilde{W}_N)$. For any $\sqrt{\frac{M}{N}}\ll\epsilon\leq \delta$ (it may depends on $M$ and $N$), and any $1\leq i\leq M$, if $\theta_i$ satisfies the following separation condition
\begin{align}
\label{wishsep}|\theta_i|\geq \sqrt{\phi}+2\delta
\end{align} 
then it will create an extreme eigenvalue $\tilde{\lambda}_i$ of $\tilde{W}_N$, where $\tilde{\lambda}_i=\lambda_i(\tilde{W}_N)$ for $\theta_i>0$, and $\tilde{\lambda}_i=\lambda_{N-M+i}(\tilde{W}_N)$ for $\theta_i<0$, with high probability, 
\begin{align}\label{wishloc}
\PP\left(\phi+1+\theta_i+\frac{\phi}{\theta_i}-\epsilon\right. \leq \tilde{\lambda}_i\leq\left. \phi+1+\theta_i+\frac{\phi}{\theta_i}+\epsilon\right)
\geq1-Ce^{-c_1N\epsilon^2+c_2M}\end{align}
where $c_1$, $c_2$ and $C$ depend only on $\delta$ and $B$. 
\end{theorem}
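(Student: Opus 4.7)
My plan is to mirror the proof of Theorem \ref{wigsum} (the Wigner additive case), using the orthogonally independent multiplicative case (Theorem \ref{eigfreemult}) as the template for the algebraic structure, with a Wishart analogue of Proposition \ref{wignerconcen} replacing the isotropic-type estimate.

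First, I would derive the master equation. Conditioning on the favorable event $\|W_N\|\in(\gamma_--\delta/2,\gamma_++\delta/2)$, which holds with exponentially high probability by the known rigidity of Wishart edge eigenvalues, for $z\in(\gamma_--\delta,\gamma_++\delta)^\complement$ the matrix $z-W_N$ is invertible. Since $\tilde{W}_N=(I_N+P_N)^{1/2}W_N(I_N+P_N)^{1/2}$ is similar to $W_N(I_N+P_N)$, one has
\begin{align*}
\det(z-\tilde{W}_N)&=\det(z-W_N-W_NP_N)\\
&=\det(z-W_N)\det(\tilde{P}_M)\det\bigl(\tilde{P}_M^{-1}-V^*(z-W_N)^{-1}W_N V\bigr),
\end{align*}
so extreme eigenvalues of $\tilde{W}_N$ correspond to zeros of $\det D(z)$ with $D(z)=\tilde{P}_M^{-1}-V^*(z-W_N)^{-1}W_N V$. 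Writing $(z-W_N)^{-1}W_N=-I+z(z-W_N)^{-1}$, one checks as in the proof of Theorem \ref{eigfreemult} that $D(z)$ is strictly monotone outside the spectrum of $W_N$, so its eigenvalues $\tau_j(z)$ are strictly increasing on $[\gamma_++\delta,+\infty)$ (and strictly decreasing on the analogous left interval). The counting-function argument from Theorem \ref{eigfreesum} then reduces (\ref{wishloc}) to two-sided control of $D(L)$ and $D(R)$ at $L=\phi+1+\theta_i+\phi/\theta_i-\epsilon$ and $R$ with $+\epsilon$.

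Next, I would state and prove a Wishart isotropic concentration estimate analogous to Proposition \ref{wignerconcen}: for any $z\in(\gamma_--\delta,\gamma_++\delta)^\complement$, any $N\times M$ matrix $V$ with orthonormal columns, and any $\xi>0$,
\begin{align*}
\PP\Bigl(\|W_N\|\in(\gamma_--\tfrac{\delta}{2},\gamma_++\tfrac{\delta}{2})\text{ and }\bigl\|V^*(z-W_N)^{-1}W_N V-T_{mp}(z)I_M\bigr\|\leq\xi\Bigr)\geq 1-Ce^{-cN\xi^2+M\ln 7}.
\end{align*}
For a fixed unit vector $u$, the map $X\mapsto u^*(z-XX^*)^{-1}XX^*u$ (suitably truncated and extended off the favorable set $\{\|XX^*\|\leq\gamma_++\delta/2\}$) is Lipschitz in $X$ with constant $O(\delta^{-2})$, so the log-Sobolev inequality gives concentration at scale $e^{-cN\xi^2}$ about its expectation. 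The expectation is then pinned down up to $O(N^{-1/2+o(1)})$ by the isotropic local Marchenko-Pastur law (e.g.\ Bloemendal-Erd\H os-Knowles-Yau-Yin), with the usual $\eta\downarrow 1/\sqrt N$ optimization and Lipschitz extension to the real axis, exactly as in the last display of Proposition \ref{wignerconcen}. A standard $\epsilon$-net on the $M$-dimensional subspace spanned by the columns of $V$ upgrades the single-vector bound to the operator-norm bound, producing the $M\ln 7$ factor.

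Finally, I would do the explicit T-transform computation: for $|\theta|>\sqrt\phi$ one verifies $T_{mp}(\phi+1+\theta+\phi/\theta)=1/\theta$ by writing $\phi+1+\theta+\phi/\theta-\gamma_\pm=(\sqrt\theta\pm\sqrt{\phi/\theta})^2$ and simplifying the square root. Thus the separation condition $|\theta_i|\geq\sqrt\phi+2\delta$ ensures $L,R\in(\gamma_++\delta,\infty)$ (or the left analogue) and $T_{mp}(R)\leq 1/\theta_i-\epsilon/C_{B,\delta}<1/\theta_i<T_{mp}(L)$. Applying the Wishart concentration estimate at $z=L$ and $z=R$ with $\xi\sim\epsilon/C_{B,\delta}$ yields $D(L)\leq(\tilde P_M^{-1}-1/\theta_i)I_M-c\epsilon I_M$ and $D(R)\geq (\tilde P_M^{-1}-1/\theta_i)I_M+c\epsilon I_M$ with probability $1-Ce^{-c_1N\epsilon^2+c_2M}$, which as in the additive proof gives $\mathfrak n(L)\leq M_1-i$ and $\mathfrak n(R)\geq M_1-i+1$, hence $\tilde\lambda_i\in[L,R]$.

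The main obstacle is the Wishart isotropic concentration step, specifically controlling the isotropic resolvent \emph{weighted by $W_N$} uniformly on an $M$-dimensional subspace: the log-Sobolev concentration around the mean is the easy half, while identifying the mean with $T_{mp}(z)$ to the required precision genuinely uses an isotropic local law and is what forces the $\sqrt{M/N}\ll\epsilon$ lower bound in (\ref{wishloc}).
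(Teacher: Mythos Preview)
Your proposal is correct and follows essentially the same approach as the paper's proof: the paper establishes the Wishart analogue of Proposition \ref{wignerconcen} (your displayed concentration inequality is exactly the paper's \eqref{samplecase}) via log-Sobolev concentration, the isotropic local law for sample covariance matrices of Bloemendal--Erd\H os--Knowles--Yau--Yin to identify the mean, and an $\epsilon$-net, and then refers to the argument of Theorem \ref{eigfreemult} for the master equation and counting-function step. Your write-up is in fact more detailed than the paper's terse proof (for instance you spell out the $T_{mp}$ inversion and the monotonicity of $D(z)$), but the route is the same.
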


\begin{proof}
We denote the set of matrices $D=\{H\in \mbf{M}_{N\times p}: \gamma_{-}-\frac{\delta}{2}\leq \min_{1\leq i\leq N} \lambda_i(HH^*)\leq \max_{1\leq i\leq N}\lambda_i(HH^*)\leq \gamma_{+}+\frac{\delta}{2}\}$. Under our assumption that the entries satisfy the logarithmic Sobolev inequality, it is well known that $\EE[\min_{1\leq i\leq N} \lambda_i(W_N)]=\gamma_{-}$. Therefore by the logarithmic Sobolev inequality we have $\PP(\min_{1\leq i\leq N} \lambda_i(W_N)\leq \gamma_{-}-\frac{\delta}{2})\leq e^{-cN\delta^2}$. Similarly $\PP(\max_{1\leq i\leq N} \lambda_i(W_N)\geq \gamma_{+}+\frac{\delta}{2})\leq e^{-cN\delta^2}$. Therefore we have $\PP(X_N\in D)\geq 1-2e^{-cN\delta^2}$. Since on the region $D$, for any $z\in (\gamma_{-}-\delta, \gamma_{+}+\delta)^{\complement}$ and unit vector $u$, $X\rightarrow u^*(z-XX^*)^{-1}XX^*u$ is Lipschitz continuous with Lipschitz constant at most $O(\delta^{-2})$. Moreover, we also have the isotropic local law for the sample covariance matrices \cite[Theorem 2.4]{BEKYY}. Therefore analogue to Proposition \ref{wignerconcen}, we can prove the following concentration of measure inequality,
\begin{align}{\label{samplecase}}
\begin{split}
\PP(X_N\in D \text{ and }\left\|U^*(z-X_NX_N^*)^{-1}X_NX_N^*U- T_{mp}(z)I_M\right\|\leq& \frac{\xi}{3})\\
\geq& 1- C e^{-cN\xi^2+M\ln 7}, 
\end{split}
\end{align} 
where the constants $c$ and $C$ depend only on $\delta$ and the logarithmic Sobolev constant $\gamma$. With \eqref{samplecase}, the similar argument as in Theorem \ref{eigfreemult} leads to \eqref{wishloc}.
\end{proof}

Similarly as the Wigner case, we have the following corollary on the empirical distribution of extreme eigenvalues of the perturbed Wishart matrices. 
\begin{corollary}
Wishart ensemble $W_N$ and perturbation $P_N$ are defined as in Section \ref{multdef}, we assume the norm of $P_N$ is bounded by some universal constant $B$, i.e. $\|P_N\|\leq B$. Moreover we assume that the empirical distribution of the nonzero eigenvalues of $P_N$ converges weakly to a compactly supported measure $\nu$,
\begin{align*}
\hat{\nu}_{N}=\frac{1}{M}\sum_{i=1}^{M}\delta_{\theta_i}\rightarrow \nu,
\end{align*} 
with support supp $\mu \subset [a, b]$, and $a>\sqrt{\phi}$. We denote the eigenvalues of $\tilde{W}_N=(I_N+P_N)^{\frac{1}{2}}W_N(I_N+P_N)^{\frac{1}{2}}$ by $\lambda_1(\tilde{W}_N)\geq \lambda_2(\tilde{W}_N)\geq \cdots \geq \lambda_N(\tilde{W}_N)$. Almost surely, the empirical distribution of the largest $M$ eigenvalues of $\tilde{W}_N$ converges weakly to the push forward measure $\gamma_\# \nu$, 
\begin{align*}
\frac{1}{M}\sum_{i=1}^{M}\sigma_{\lambda_i(\tilde{W}_N)}\rightarrow \gamma_\#\nu, \quad a.s.
\end{align*}
where $\gamma(\theta)=\phi+1+\theta+\frac{\phi}{\theta}$.
\end{corollary}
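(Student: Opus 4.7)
The plan is to imitate the Wigner-case corollary almost verbatim, substituting Theorem \ref{wishmul} for Theorem \ref{wigsum} and the map $\gamma(\theta)=\phi+1+\theta+\phi/\theta$ for $\theta+1/\theta$. First I would fix $\delta=\tfrac{1}{3}(a-\sqrt{\phi})>0$, so that every $\theta_i\geq a-\delta$ automatically satisfies the separation condition (\ref{wishsep}): $\theta_i\geq a-\delta = \sqrt{\phi}+2\delta$. This gives access to the location estimate (\ref{wishloc}) for all such indices $i$.

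Next, I would introduce the good event
\begin{align*}
A_N=\bigcap_{i:\,\theta_i\geq a-\delta}\left\{\left|\lambda_i(\tilde{W}_N)-\phi-1-\theta_i-\frac{\phi}{\theta_i}\right|\leq \epsilon_N\right\},
\end{align*}
and apply Theorem \ref{wishmul} together with a union bound over at most $M$ indices to get $\PP(A_N^\complement)\leq CMe^{-c_1N\epsilon_N^2+c_2M}$. Choosing $\epsilon_N=\min\{\delta,(2c_2/c_1)^{1/2}(M/N)^{1/4}\}$ makes $\epsilon_N\to 0$ (since $M=o(N)$) while keeping $\sum_N\PP(A_N^\complement)<\infty$, so Borel--Cantelli yields that $A_N$ holds eventually, almost surely.

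Given a bounded Lipschitz test function $f$, I would then decompose
\begin{align*}
\frac{1}{M}\sum_{i=1}^{M}\left(f(\lambda_i(\tilde{W}_N))-f\!\left(\phi+1+\theta_i+\frac{\phi}{\theta_i}\right)\right)
\end{align*}
according to whether $\theta_i\leq a-\delta$ or $\theta_i>a-\delta$. The first piece is bounded by $\tfrac{2\|f\|_\infty}{M}\#\{i:\theta_i\leq a-\delta\}$, which tends to zero because $\hat\nu_N\to\nu$ weakly and $[a,b]$ contains the support of $\nu$. The second piece is bounded by $\epsilon_N\|f\|_{\mathcal L}$ on $A_N$, so it also tends to zero almost surely. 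Combined with the fact that $\hat\nu_N\to\nu$ weakly and $\gamma$ is continuous on $[a,b]$ (hence $f\circ\gamma$ is a valid bounded continuous test function against $\hat\nu_N$), this gives
\begin{align*}
\lim_{N\to\infty}\frac{1}{M}\sum_{i=1}^{M}f(\lambda_i(\tilde{W}_N))=\int f(\gamma(\theta))\,d\nu(\theta)=\int f\,d(\gamma_\#\nu),\quad a.s.
\end{align*}

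I do not expect any genuine obstacle: the only thing one must be a little careful about is that the separation threshold in the multiplicative Wishart case is $\sqrt{\phi}$ rather than $1$, and that the location function $\gamma(\theta)=\phi+1+\theta+\phi/\theta$ must be shown to be Lipschitz on $[a,b]$, which holds since $a>\sqrt{\phi}>0$ keeps $\theta$ bounded away from the singularity at $0$. Everything else — the Borel--Cantelli step, the choice of $\epsilon_N$, and the splitting of the sum — is identical in structure to the Wigner corollary.
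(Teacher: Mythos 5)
Your proposal is correct and is essentially the paper's intended argument: the paper gives no separate proof for the Wishart corollary, stating only that it follows "similarly as the Wigner case," and your adaptation (choosing $\delta=\tfrac{1}{3}(a-\sqrt{\phi})$ so that $a-\delta=\sqrt{\phi}+2\delta$, invoking Theorem \ref{wishmul} with a union bound, Borel--Cantelli with $\epsilon_N=\min\{\delta,(2c_2/c_1)^{1/2}(M/N)^{1/4}\}$, and splitting the test-function sum) is exactly the Wigner-case proof transposed.
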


\begin{appendix}

\section{Concentration of Measure}
\begin{proposition}{\label{orthogonalcase}}
Let $A$ be a deterministic $N\times N$ matrix. $\tilde{U}$ is the first $M$ columns of an $N\times N$ orthogonal (or unitary) matrix following Haar measure. For any $\xi>0$, we have 
\begin{align}{\label{normbound1}}
\PP\left(\left\|\tilde{U}^*A\tilde{U}-\frac{\Tr A}{N}I_M\right\|\leq \xi\right)\geq 1-Ce^{-cN\xi^2+M\ln 7},
\end{align}
where $c$ and $C$ depend on the norm of $A$.
\end{proposition}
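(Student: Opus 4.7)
My plan is to reduce the operator-norm bound to pointwise concentration of quadratic forms against a uniformly distributed random unit vector, and then pass to the operator norm via an $\varepsilon$-net argument on the unit sphere in $\mathbb{C}^M$. First I would split $A$ into its Hermitian and anti-Hermitian parts (equivalently real and imaginary parts, each self-adjoint up to an $i$); since $\Tr$ and $\tilde U^* \cdot \tilde U$ are linear, it suffices to prove the estimate for Hermitian $A$, with an overall factor of at most $2$ absorbed into $C$. For Hermitian $B := \tilde U^* A \tilde U - (\Tr A / N) I_M$ the operator norm equals $\sup_{v \in S^{M-1}} |v^* B v|$, and the key observation is that for each fixed unit vector $v \in \mathbb{C}^M$, the vector $\tilde U v = U \begin{pmatrix} v \\ 0 \end{pmatrix}$ is uniformly distributed on the unit sphere of $\mathbb{C}^N$ by left-invariance of the Haar measure. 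Thus the diagonal entry $v^* \tilde U^* A \tilde U v$ equals $w^* A w$ where $w$ is Haar-uniform on $S^{N-1}$, and its mean is $\Tr(A)/N$ since $\mathbb E[w w^*] = I_N/N$.

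Next I would carry out the single-vector concentration. The map $w \mapsto w^* A w$ is Lipschitz on $S^{N-1}$ with constant $2\|A\|$, because $|w^* A w - w'^* A w'| \le 2\|A\|\,\|w-w'\|$. The uniform measure on $S^{N-1}$ satisfies a logarithmic Sobolev inequality with constant of order $1/N$ (which can be transferred from the orthogonal/unitary group, whose Haar measure satisfies a log-Sobolev inequality by the Bakry--\'Emery curvature-dimension argument, as recalled in the reference \cite{GZ} used in the paper). Herbst's argument then gives
\[
\PP\!\left(|v^* \tilde U^* A \tilde U v - \Tr(A)/N| > t\right) \le 2 e^{-c N t^2 / \|A\|^2}
\]
for every fixed unit $v$, where $c$ is universal; this is the per-vector estimate needed for the net argument.

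Finally I would upgrade to the operator norm. Choose a $1/3$-net $\mathcal N \subset S^{M-1}$ with $|\mathcal N| \le 7^M$ (standard volume bound). For any Hermitian $B$ the elementary inequality $\|B\| \le 3 \sup_{v \in \mathcal N} |v^* B v|$ holds, obtained by approximating an optimizer $v$ by $v_0 \in \mathcal N$ and bounding the cross terms by $(2/3)\|B\|$. Applying the per-vector estimate of the previous paragraph at each $v_0 \in \mathcal N$ with $t = \xi/3$ and taking a union bound yields
\[
\PP\!\left(\|\tilde U^* A \tilde U - (\Tr A / N) I_M\| > \xi\right) \le 7^M \cdot 2 e^{-c N \xi^2 / (9\|A\|^2)},
\]
which is precisely the stated bound after absorbing constants. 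The only subtle point is the log-Sobolev constant on $S^{N-1}$ (or on $U(N)$/$O(N)$): it must scale like $1/N$, not merely be $O(1)$, so that the exponent in $\xi$ comes with a factor of $N$. This is the step where one invokes the existing curvature-based results for Haar measure; everything else is the standard net-and-concentration packaging used throughout the paper.
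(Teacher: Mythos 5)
Your proposal is correct and follows essentially the same route as the paper: reduce to the trace-free case, use that $\tilde U v$ is Haar-uniform on $S^{N-1}$ to get single-vector concentration of $v^*\tilde U^* A\tilde U v$ around $\Tr A/N$, then pass to the operator norm via a $1/3$-net of cardinality at most $7^M$ and a union bound. The one small refinement you add over the paper's write-up is the explicit split of $A$ into Hermitian and anti-Hermitian parts, which is needed to justify the identity $\|B\|=\sup_{v}|v^*Bv|$ for the (a priori non-normal) matrix $B=\tilde U^*\tilde A\tilde U$; the paper uses that identity directly, which is harmless in its applications since $A$ there is always a function of the Hermitian matrix $H_N$, but your version covers general $A$ as literally stated.
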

\begin{proof}
We will prove the orthogonal case, the unitary case is exactly the same. Denote $\tilde{A}=A-\frac{\Tr A}{N}I_N$, then $\Tr \tilde{A}=0$. (\ref{normbound1}) is equivalent to show that $\|\tilde{U}^{*}\tilde{A}\tilde{U}\|\leq \xi$ with high probability. 

The proof follows from the standard epsilon net argument. We refer to \cite[Chapter 2.3.1]{Ta} and \cite[Chapter 5.2.2]{Ve} for a detailed discussion on epsilon net argument.  For any $v\in S^{M-1}=\{x\in \RR^M: |x|=1\}$, $\tilde{U}v$ is uniformly distributed on the sphere $S^{N-1}$. From the application of a well-known concentration of measure result, we have
\begin{align}{\label{orthogonalconcentration}}
\PP\left(\left|v^*\tilde{U}^* \tilde{A}\tilde{U}v\right|\geq \frac{\xi}{3}\right)=\PP\left(\left|v^*\tilde{U}^* A\tilde{U}v-\EE\left[v^*\tilde{U}^* A\tilde{U}v\right]\right|\geq \frac{\xi}{3}\right)\leq C e^{-cN\xi^2},
\end{align} 
for any $\xi>0$, where $c$ and $C$ depend on the norm of $A$.

Take $\Sigma$ to be a maximal $\frac{1}{3}$-net of the sphere $S^{M-1}$, i.e. a set of points in $S^{M-1}$ that are separated from each other by a distance of at least $\frac{1}{3}$, and which is maximal with respect to set inclusion. The volume argument gives us that $\Sigma$ has cardinality
\begin{align*}
|\Sigma|\leq (1+\frac{1}{\frac{1}{2}\times\frac{1}{3}})^{M}=7^{M}.
\end{align*}
For any $v\in S^{M-1}$, there exists some $w\in \Sigma$ such that $|v-w|\leq \frac{1}{3}$. Then we have,
\begin{align*}
\left|v^*\tilde{U}^*\tilde{A}\tilde{U}v\right|
\leq& \left|w^*\tilde{U}^*\tilde{A}\tilde{U}w\right|+\left|(v-w)^*\tilde{U}^*\tilde{A}\tilde{U}w\right|+\left|v^*\tilde{U}^*\tilde{A}\tilde{U}(v-w)\right|\\
\leq& \left|w^*\tilde{U}^*\tilde{A}\tilde{U}w\right|+\frac{2}{3}\left\|\tilde{U}^*\tilde{A}\tilde{U}\right\|.
\end{align*}
Therefore, we can replace the sphere $S^{M-1}$ by its $\frac{1}{3}-net$, and get
\begin{align}{\label{sizebound1}}
\left\|\tilde{U}^*\tilde{A}\tilde{U}\right\|=\sup_{v\in S^M}\left|v^*\tilde{U}^*\tilde{A}\tilde{U}v\right|
\leq 3\sup_{v\in\Sigma}\left|v^*\tilde{U}^*\tilde{A}\tilde{U}v\right|.
\end{align}
Combining (\ref{orthogonalconcentration}) and (\ref{sizebound1}), we have
\begin{align*}
\PP(\left\|v^*\tilde{U}^* \tilde{A}\tilde{U}v\right\|\geq \xi)
\leq& \PP\left(\bigcup_{v\in \Sigma}\left\{\left|v^*\tilde{U}^* \tilde{A}\tilde{U}v\right|\geq \frac{\xi}
{3}\right\}\right)\\
\leq& \sum_{v\in\Sigma}\max_{v\in \Sigma}
\left\{\PP\left(\left|v^*\tilde{U}^* \tilde{A}\tilde{U}v\right|\geq \frac{\xi}
{3}\right)\right\}\\
\leq&C e^{-cN\xi^2+M\ln 7}. 
\end{align*}
This finishes the proof.
\end{proof}

\section{Bounds on Stieltjes Transform and T-Transform}
\begin{proposition}{\label{inequalitySandT}}
$H_N$ and $P_N$ are defined as in Section (\ref{sumdef}), such that their norms are bounded by $B$. $m_N$ is the Stieltjes transform of the empirical measure of $H_N$. For any $|\xi| \leq \delta$, we have 
\begin{align}
\label{fsbound} 
&\frac{|\xi|}{4B^2}\leq \left|m_N \left(m_N^{-1}(\frac{1}{\theta_i})+\xi\right)-\frac{1}{\theta_i}\right| \leq \frac{4|\xi|}{ \delta^2}\\
\label{ssbound}&\frac{|\xi|}{8B^3}\leq \left|m_N'\left(m_N^{-1}(\frac{1}{\theta_i})+\xi\right)-m_N'\left( m_N^{-1}(\frac{1}{\theta_i})\right)\right| \leq \frac{8|\xi|}{ \delta^3}
\end{align}
 where $\theta_i$ is an eigenvalue of $P_N$ such that 
$m_N^{-1}(\frac{1}{\theta_i})\geq \lambda_1(H_N)+2\delta$, or  $m_N^{-1}(\frac{1}{\theta_i})\leq \lambda_N(H_N)-2\delta$. 

$H_N$ and $P_N$ are defined as in Section (\ref{multdef}), such that their norms are bounded by $B$. $T_N$ is the Stieltjes transform of the empirical measure of $H_N$. For any $|\xi| \leq \delta$, we have
\begin{align}
\label{ftbound} 
&\frac{|\xi|}{4B^3}\leq \left|T_N \left(T_N^{-1}(\frac{1}{\theta_i})+\xi\right)-\frac{1}{\theta_i}\right| \leq \frac{4B|\xi|}{ \delta^3}\\
\label{stbound}&\frac{|\xi|}{8B^4}\leq \left|T_N'\left(T_N^{-1}(\frac{1}{\theta_i})+\xi\right)-T_N'\left( T_N^{-1}(\frac{1}{\theta_i})\right)\right| \leq \frac{8B|\xi|}{ \delta^4}
\end{align}
where $\theta_i$ is an eigenvalue of $P_N$ such that $T_N^{-1}(\frac{1}{\theta_i})\geq \lambda_1(H_N)+2\delta$, or $T_N^{-1}(\frac{1}{\theta_i})\leq \lambda_N(H_N)-2\delta$. 
\end{proposition}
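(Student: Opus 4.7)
The plan is to reduce each pair of inequalities to a uniform pointwise bound on the next derivative via the mean-value theorem: setting $z_0:=m_N^{-1}(1/\theta_i)$ and $I:=[z_0-\delta,z_0+\delta]$ (and analogously for $T_N$), the statements \eqref{fsbound}--\eqref{ssbound} reduce to bounds of the form $c_B\leq|m_N^{(k)}(\eta)|\leq C_\delta$ for $k=1,2$ uniformly on $I$, and similarly for the T-transform. These will be read off from the explicit series
\[
m_N'(z)=-\frac{1}{N}\sum_j(z-\lambda_j)^{-2},\qquad m_N''(z)=\frac{2}{N}\sum_j(z-\lambda_j)^{-3},
\]
\[
T_N'(z)=-\frac{1}{N}\sum_j\frac{\lambda_j}{(z-\lambda_j)^2},\qquad T_N''(z)=\frac{2}{N}\sum_j\frac{\lambda_j}{(z-\lambda_j)^3}.
\]

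The first step is to localize $I$. The separation hypothesis $z_0\geq\lambda_1(H_N)+2\delta$ together with $|\xi|\leq\delta$ gives $\eta-\lambda_j\geq\delta$ on the window, the key distance lower bound. For an upper bound on $\eta$, I use $m_N(z)\geq(z+B)^{-1}$ for $z>\lambda_1\geq-B$, which forces $z_0\leq 2B$ from $m_N(z_0)=1/\theta_i$ and $|\theta_i|\leq B$; for the T-transform, the estimate $T_N(z)\leq B/(z-\lambda_1)$ gives instead $z_0\leq\lambda_1+B^2\leq B+B^2$. In either case $\eta-\lambda_j$ is bounded above by a constant depending only on $B$. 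With this two-sided control, the upper bounds $|m_N^{(k)}(\eta)|\leq k!/\delta^k$ follow termwise, and so do the lower bounds $|m_N^{(k)}(\eta)|\geq k!/(3B+\delta)^k$, producing \eqref{fsbound} and \eqref{ssbound} after multiplication by $|\xi|$ through the MVT. The T-transform upper bounds acquire the advertised extra factor $B$ in \eqref{ftbound}--\eqref{stbound} from the $|\lambda_j|\leq B$ in the numerator.

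The delicate point is the lower bound on $|T_N'|$ (and hence on $|T_N''|$), since the naive termwise estimate fails when many $\lambda_j$ vanish. My plan is to use the algebraic identity $zT_N'(z)=-T_N(z)-\frac{1}{N}\sum_j\lambda_j^2/(z-\lambda_j)^2$, obtained by writing $\lambda_j=z-(z-\lambda_j)$ in the definition of $T_N'$. Combined with the Jensen/Cauchy--Schwarz inequality
\[
\frac{1}{N}\sum_j\left(\frac{\lambda_j}{z-\lambda_j}\right)^2\geq\left(\frac{1}{N}\sum_j\frac{\lambda_j}{z-\lambda_j}\right)^2=T_N(z)^2,
\]
this yields $|T_N'(z)|\geq T_N(z)^2/z$. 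Since $T_N$ is monotone on $I$ and $T_N(z_0)=1/\theta_i$ with $|\theta_i|\leq B$, the already-established upper bound on $|T_N'|$ keeps $T_N(\eta)\geq 1/(2B)$ on $I$ (shrinking $\delta$ to $\min(\delta,c/B^3)$ if necessary), and then $z_0\leq B+B^2$ delivers $|T_N'(\eta)|\geq c'_B$. For $|T_N''|$ I use the identity $T_N''(z)=2m_N'(z)+zm_N''(z)$ in tandem with the lower bound on $m_N''$ already proved, noting that both terms have the same sign for $z>\lambda_1$.

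The main technical obstacle is this last step, the lower bound on $|T_N'|$; everything else is a routine termwise calculation, so the proof is short once the algebraic identity and Jensen's inequality are in place.
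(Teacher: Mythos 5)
Your reduction via the mean-value theorem to pointwise bounds on $m_N^{(k)}$, $T_N^{(k)}$ is the same first step as the paper, but from there the two arguments diverge. The paper absorbs the intermediate point $\eta=z_0+\gamma\xi$ into a uniform factor (using $z_0-\lambda_k\geq 2\delta\geq\delta+|\xi|$, so $(\eta-\lambda_k)^{-2}\geq\tfrac14(z_0-\lambda_k)^{-2}$) and then applies Jensen's inequality \emph{at} $z_0$, where $m_N(z_0)=1/\theta_i$ exactly; for the $T$-transform it uses the weighted version of Jensen with weights $\lambda_j$, which handles the ``many $\lambda_j=0$'' issue automatically. You instead prove a two-sided a priori bound $c_B\leq\eta-\lambda_j\leq C_B$ and estimate the Stieltjes derivatives termwise, with algebraic identities for the $T$-transform. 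This is a legitimate alternative route, but note two things: (i) the termwise lower bounds you quote, e.g.\ $|m_N'(\eta)|\geq (3B+\delta)^{-2}$, are strictly weaker than the advertised $\frac{1}{4B^2}$ (and likewise for the higher derivatives), so you do not recover the stated constants, only constants of the same flavor — acceptable for the applications but not literally the proposition; (ii) the inequality you cite for the upper bound on $z_0$ goes the wrong way — from $m_N(z_0)=1/\theta_i$ and $m_N(z)\geq(z+B)^{-1}$ you only get $z_0\geq\theta_i-B$. What you want is $m_N(z)\leq(z-\lambda_1)^{-1}$, which gives $z_0-\lambda_1\leq\theta_i\leq B$, hence $z_0\leq 2B$.

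The genuine gap is the lower bound on $|T_N''|$. You assert that in the identity $T_N''(z)=2m_N'(z)+z\,m_N''(z)$ the two terms have the same sign for $z>\lambda_1$. They do not: since $H_N$ is positive semi-definite, $z>\lambda_1\geq 0$, and there $m_N'(z)=-\frac{1}{N}\sum(z-\lambda_j)^{-2}<0$ while $m_N''(z)=\frac{2}{N}\sum(z-\lambda_j)^{-3}>0$, so $2m_N'(z)<0<z\,m_N''(z)$. Consequently $T_N''=z\,m_N''-2|m_N'|$ is a difference of positive quantities and cannot be bounded below by the lower bound on $m_N''$; the lower bound on $m_N''$ combined with an upper bound on $|m_N'|$ gives $T_N''\geq z\,c_B-2\delta^{-2}$, which can be negative. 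The step fails. A fix in the spirit of your $T_N'$ argument is to write
\begin{align*}
z\,T_N''(z)=\frac{2}{N}\sum_j\frac{z\lambda_j}{(z-\lambda_j)^3}=-2T_N'(z)+\frac{2}{N}\sum_j\frac{\lambda_j^2}{(z-\lambda_j)^3},
\end{align*}
in which \emph{both} terms are nonnegative for $z>\lambda_1\geq 0$, so $z\,T_N''(z)\geq 2|T_N'(z)|$ and you can feed in your already-established lower bound on $|T_N'|$. Alternatively, the paper's implicit route — weighted Jensen with weights $\lambda_j$ applied to $\frac{1}{N}\sum\lambda_j(z_0-\lambda_j)^{-3}$ — handles both $T_N'$ and $T_N''$ uniformly and is arguably shorter than invoking separate identities.
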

\begin{proof}
We will only prove (\ref{fsbound}) for positive eigenvalues of $P_N$, the other inequalities can be proved in the same way. On the interval $(\lambda_1(H_N)+\delta, +\infty)$, $m_N(z)$ is positive and strictly decreasing. By Taylor expansion, there exists some $\gamma\in [0,1]$ such that
\begin{align}
\notag \left|m_N(m_N^{-1}(\frac{1}{\theta_i})+\xi)- \frac{1}{\theta_i}\right|
=&\left|\xi m_N'(m_N^{-1}(\frac{1}{\theta_i})+\gamma\xi)\right|\\
\notag=&\frac{|\xi|}{N}\sum_{k=1}^{N}\frac{1}{\left(m_N^{-1}(\frac{1}{\theta_i})+\gamma\xi-\lambda_k(H_N)\right)^2}\\
\label{removeep}\geq&\frac{|\xi|}{4N}\sum_{k=1}^{N}\frac{1}{\left(m_N^{-1}(\frac{1}{\theta_i})-\lambda_k(H_N)\right)^2}\\
\label{cauchy}\geq&|\xi| \left(\frac{1}{2N}\sum_{k=1}^{N}\frac{1}{m_N^{-1}(\frac{1}{\theta_i})-\lambda_k(H_N)}\right)^2\\
\notag=&\frac{|\xi|}{(4\theta_i)^2}\geq\frac{|\xi|}{4B^2}
\end{align}
For the inequality (\ref{removeep}), we use the fact that $m_N^{-1}(\frac{1}{\theta_i})-\lambda_k(H_N)\geq2\delta \geq \delta+ |\xi|$. (\ref{cauchy}) is from Jensen's inequality. For the upper bound,
\begin{align*}
 \left|m_N(m_N^{-1}(\frac{1}{\theta_i})+\xi)- \frac{1}{\theta_i}\right|
\leq\frac{4|\xi|}{N}\sum_{k=1}^{N}\frac{1}{\left(m_N^{-1}(\frac{1}{\theta_i})-\lambda_k(H_N)\right)^2}
\leq\frac{4|\xi|}{\delta^2}.
\end{align*}
\end{proof}
\end{appendix}

\section*{Acknowledgement}
The author expresses grateful thanks to Prof. Alice Guionnet, for introducing me to this problem, helpful discussions throughout the research process and constructive suggestions on the first draft of this paper.

\printbibliography
\end{document}